\documentclass[letterpaper, 10pt, journal, twocolumn, final]{IEEEtran}
\usepackage{cite}
\usepackage{amssymb,amsmath,amsfonts,amsthm}
\usepackage{graphicx}
\usepackage{textcomp}

\usepackage{mathtools}
\usepackage{bm}
\usepackage{algorithmicx}
\usepackage{algorithm}
\usepackage{algpseudocode}
\usepackage{graphicx}
\usepackage{epstopdf}
\usepackage{amsfonts}
\usepackage{dsfont}
\usepackage[hidelinks]{hyperref}
\usepackage{xcolor}

\usepackage{cuted}

\usepackage{enumitem}
\usepackage{tikz}
\usepackage{float}

\newtheorem{theorem}{Theorem}
\newtheorem{corollary}{Corollary}
\newtheorem{lemma}{Lemma}

\newtheorem{assumption}{Assumption}

\newcommand\oprocendsymbol{\hbox{$\square$}}
\newcommand\oprocend{\relax\ifmmode\else\unskip\hfill\fi\oprocendsymbol
}

\usepackage{tikz}
\makeatletter
\newcommand\mathcircled[1]{%
  \mathpalette\@mathcircled{#1}%
}
\newcommand\@mathcircled[2]{%
  \tikz[baseline=(math.base)] \node[draw,circle,inner sep=1pt] (math) {$\m@th#1#2$};%
}

\newcounter{AlphabetCounter}

\newcommand{\one}{\mathbf{1}}

\newcommand{\R}{\mathbb{R}}
\newcommand{\until}[1]{\{1,\ldots,#1\}}

\newcommand{\EE}{\mathcal{E}}
\newcommand{\GG}{\mathcal{G}}

\newcommand{\NN}{\mathcal{N}}

\newcommand{\VV}{\mathcal{V}}

\newcommand{\E}{\mathbb{E}}

\newcommand{\norm}[1]{\left\lVert#1\right\rVert}

\newcommand{\set}{\{1,\dots,N\}}
\newcommand{\col}{\text{col}}

\newcommand{\bL}{\bar{L}}

\newcommand{\x}{x}
\newcommand{\bx}{\mathbf{\x}}
\newcommand{\xstart}{\x_\star^{t+1}}
\newcommand{\xstartm}{\x_\star^{t}}
\newcommand{\xt}{\bx^{t+1}}
\newcommand{\xtm}{\bx^{t}}
\newcommand{\xit}{\x_{i}^{t+1}}
\newcommand{\xitm}{\x_{i}^{t}}

\newcommand{\xjtm}{\x_{j}^{t}}
\newcommand{\avgxt}{\bar{\x}^{t+1}}
\newcommand{\avgxtm}{\bar{\x}^{t}}

\newcommand{\s}{s}
\newcommand{\bs}{\mathbf{\s}}
\newcommand{\st}{\bs^{t+1}}
\newcommand{\stm}{\bs^{t}}
\newcommand{\sit}{\s_{i}^{t+1}}
\newcommand{\sitm}{\s_{i}^{t}}

\newcommand{\sjtm}{\s_{j}^{t}}
\newcommand{\avgst}{\bar{\s}^{t+1}}
\newcommand{\avgstm}{\bar{\s}^{t}}

\newcommand{\bg}{\mathbf{g}}
\newcommand{\gt}{\bg^{t+1}}
\newcommand{\gtm}{\bg^{t}}
\newcommand{\git}{g_{i}^{t+1}}
\newcommand{\gitm}{g_{i}^{t}}

\newcommand{\mm}{m}
\newcommand{\bmm}{\mathbf{\mm}}
\newcommand{\mt}{\bmm^{t+1}}
\newcommand{\mtm}{\bmm^{t}}
\newcommand{\mmit}{\mm_{i}^{t+1}}
\newcommand{\mmitm}{\mm_{i}^{t}}

\newcommand{\avgmt}{\bar{\mm}^{t+1}}
\newcommand{\avgmtm}{\bar{\mm}^{t}}

\newcommand{\vv}{v}
\newcommand{\bvv}{\mathbf{\vv}}
\newcommand{\vt}{\bvv^{t+1}}
\newcommand{\vtm}{\bvv^{t}}
\newcommand{\vit}{\vv_{i}^{t+1}}
\newcommand{\vitm}{\vv_{i}^{t}}

\newcommand{\avgvt}{\bar{\vv}^{t+1}}
\newcommand{\avgvtm}{\bar{\vv}^{t}}

\newcommand{\V}{V}
\newcommand{\bV}{\mathbf{\V}}
\newcommand{\Vt}{\bV^{t+1}}
\newcommand{\Vtm}{\bV^{t}}

\newcommand{\avgVtm}{\bar{\V}^{t}}

\newcommand{\dd}{d}
\newcommand{\bdd}{\mathbf{\dd}}
\newcommand{\dt}{\bdd^{t+1}}
\newcommand{\dtm}{\bdd^{t}}

\newcommand{\avgdt}{\bar{\dd}^{t+1}}
\newcommand{\avgdtm}{\bar{\dd}^{t}}

\newcommand{\ft}{f^{t+1}}
\newcommand{\ftm}{f^{t}}
\newcommand{\fit}{f_{i}^{t+1}}
\newcommand{\fitm}{f_{i}^{t}}
\newcommand{\nablaft}{\tfrac{1}{N}\sum_{i=1}^{N}\nabla \fit(\xit)}
\newcommand{\nablaftm}{\tfrac{1}{N}\sum_{i=1}^{N}\nabla \fitm(\xitm)}
\newcommand{\ones}{\tfrac{1}{N}\one\one^\top}
\newcommand{\eg}{e.g.}
\newcommand{\ie}{i.e.}

\allowdisplaybreaks

\def\algo/{GTAdam}
\def\GT/{GT}
\def\DGD/{DGD}
\def\DADAM/{DAdam}
\def\er/{Erd\H{o}s-R\'enyi}
\newcommand{\srN}{\sqrt{N}}
\newcommand{\ped}{\gamma}
\newcommand{\eig}{\chi}

\DeclareMathOperator{\diag}{diag}
\newcommand{\vertiii}[1]{{\left\vert\kern-0.25ex\left\vert\kern-0.25ex\left\vert #1 
    \right\vert\kern-0.25ex\right\vert\kern-0.25ex\right\vert}}

\begin{document}

\title{GTAdam: Gradient Tracking with Adaptive Momentum for Distributed Online Optimization}

\author{Guido Carnevale, Francesco Farina, Ivano
  Notarnicola, Giuseppe Notarstefano
  \thanks{The authors are with the Department of Electrical, 
  Electronic and Information Engineering, University of Bologna, Bologna, Italy,
  \texttt{\{name.lastname\}@unibo.it}.
  This result is part of a project that has received funding from the European 
  Research Council (ERC) under the European Union's Horizon 2020 research 
  and innovation programme (grant agreement No 638992 - OPT4SMART).
  }}

\maketitle

\begin{abstract}%
	This paper deals with a network of computing agents aiming to solve an online optimization problem in a distributed fashion, i.e., by means of local computation and communication, without any central coordinator. We propose the gradient tracking with adaptive momentum estimation (GTAdam) distributed algorithm, which combines a gradient tracking mechanism with first and second order momentum estimates of the gradient. The algorithm is analyzed in the online setting for strongly convex cost functions with Lipschitz continuous gradients. We provide an upper bound for the dynamic regret given by a term related to the initial conditions and another term related to the temporal variations of the objective functions. Moreover, a linear convergence rate is guaranteed in the static setup. The algorithm is tested on a time-varying classification problem, on a (moving) target localization problem, and in a stochastic optimization setup from image classification. In these numerical experiments from multi-agent learning, GTAdam outperforms state-of-the-art distributed optimization methods.
\end{abstract}

\section{Introduction}

In this paper, we deal with online optimization problems over networks and propose a new 
distributed algorithm. In this framework, interconnected computing agents have only a partial knowledge of the problem to solve, but can exchange information with neighbors according to a given communication graph and
without any central unit. 
In particular, we consider networks represented by a
weighted graph $\GG=(\VV,\EE,\mathcal{W})$, where $\VV=\until{N}$ is the set of agents,
$\EE\subseteq\VV\times\VV$ is the set of edges (or communication links), and
$\mathcal{W}\in\R^{N\times N}$ is the (weighted) adjacency matrix of the graph. The matrix
$\mathcal{W}$ is compliant with the topology described by $\EE$, \ie, being $w_{ij}$
the $(i, j)$-entry of $\mathcal{W}$, then $w_{ij} > 0$ if $(i,j) \in \mathcal{E}$ and
$w_{ij} =0$ otherwise. We denote $\NN_i = \{ j \in \VV \mid (j,i) \in \EE\}$
the set of (in-)neighbors of agent $i$. 

The aim of the network is to cooperatively solve the online optimization
problem
\begin{equation}\label{pb:problem}
	\min_{\x \in \R^n} \: \: \sum_{i=1}^N \fitm(\x), \quad t\geq 0,
\end{equation}
where each $\fitm:\R^n\to\R$ is a local function revealed only to agent $i$ at time
$t$.
In the following, we let $\ftm(\x) \triangleq \sum_{i=1}^N \fitm(\x)$. 
This distributed optimization framework captures a 
variety of estimation and learning problems over networks, including
distributed data classification and localization in smart sensor networks, see the recent survey~\cite{li2022survey} for an overview.

In this paper, we address the distributed solution of the online optimization
problem~\eqref{pb:problem} in terms of \emph{dynamic regret}
(see,~\eg,~\cite{li2022survey}).
In particular, let $\xitm$ be the solution estimate of the problem
at time $t$ maintained by agent $i$, and let
$\xstartm$ be a minimizer of $\sum_{i=1}^N \fitm$. 
Then, the agents want to minimize the dynamic regret defined as
\begin{align}\label{eq:regret}
	R_T \triangleq \sum_{t=1}^T\ftm(\avgxtm) -  \sum_{t=1}^T\ftm(\xstartm),
\end{align}  
for a finite value $T > 1$ with
$\avgxtm \triangleq \frac{1}{N}\sum_{i=1}^N \xitm$. Another possible performance metric is the so-called static regret (see, e.g.,~\cite{li2022survey}). %
The dynamic regret~\eqref{eq:regret} is known to be more challenging than the static one~\cite{li2022survey} and, for this reason, consistently with the majority of the recent papers in literature, this work focuses on the dynamic regret~\eqref{eq:regret}.
As it is customary in the distributed setting, we also complement these measures with the consensus metric $\sum_{i=1}^N \norm{\x_i^T - \bar{\x}^T}^2$,
 quantifying how far from consensus the local decisions are.

\subsubsection*{Related work}
\label{sec:related_work} %

The proposed distributed algorithm combines a gradient tracking mechanism 
with an adaptive estimation of first- and second-order momenta.

We organize the literature review in three main parts:
distributed algorithms for online optimization, gradient tracking distributed
schemes (mainly suited for static optimization), and centralized methods for
online and stochastic optimization based on adaptive momentum estimation.

Online optimization problems, characterized by time-varying cost functions, have
been originally addressed in the centralized framework, see, e.g.,
\cite{simonetto2016class,fazlyab2017prediction} and references therein, but
recently have received significant attention also in the distributed
optimization literature.
In~\cite{cavalcante2013distributed} an online optimization algorithm based on a
distributed subgradient scheme is proposed.
In \cite{towfic2014adaptive} an adaptive diffusion algorithm is proposed to
address changes regarding both the cost function and the constraints
characterizing the problem. A class of coordination algorithms that generalize
distributed online subgradient descent and saddle-point dynamics is proposed in
\cite{mateos2014distributed} for network scenarios modeled by jointly-connected
graphs. An algorithm consisting of a subgradient flow combined with a push-sum
consensus is studied in \cite{akbari2015distributed} for time-varying directed
graphs. Cost uncertainties and switching communication topologies are addressed
in \cite{hosseini2016online} by using a distributed algorithm based on dual
subgradient averaging. A distributed version of the mirror descent algorithm is
proposed in \cite{shahrampour2017distributed} to address online optimization
problems. In~\cite{akbari2019individual} an online algorithm based on the
alternating direction method of multipliers is proposed, and
in~\cite{yi2020distributed} time-varying inequality constraints are also
considered.
Online optimization is strictly related to stochastic optimization. Regarding
distributed algorithms for stochastic optimization, in \cite{ram2010distributed}
authors investigate the convergence properties of a distributed algorithm
dealing with subgradients affected by stochastic errors.
In \cite{farina2019randomized} a block-wise method is proposed to deal
with high-dimensional stochastic problems, while in~\cite{pu2020distributed} a
distributed gradient tracking method is analyzed in a stochastic set-up.

The gradient tracking scheme, which we extend in the present
paper, has been proposed in several variants in
recent years and studied under different problem
assumptions~\cite{shi2015extra,varagnolo2015newton,dilorenzo2016next,nedic2017achieving,
	qu_harnessing_2018,xu2017convergence,xin2018linear,scutari2019distributed}. %
This algorithm leverages a ``signal tracking action'' based on the dynamic average
consensus (see \cite{zhu2010discrete,kia2019tutorial}) in order to let the
agents obtain a local estimate of the gradient of the whole cost
function.
Recently, in~\cite{zhang2019distributed} the gradient tracking algorithm has
been applied to online optimization problems.
Finally, in~\cite{notarnicola2020personalized} a dynamic gradient tracking
update is combined with a recursive least squares scheme to address in a
distributed way the (centralized) personalized optimization framework introduced
in~\cite{simonetto2019personalized}.

The other algorithm inspiring our work is Adam, a centralized method originally
proposed in~\cite{kingma2014adam}. Adam is an optimization algorithm based on
adaptive estimates of first- and second-order gradient momenta that has been
successfully employed in many online and stochastic optimization frameworks.
Additional insights about Adam are given 
in~\cite{bock2018improvement,chen2018convergence,reddi2018convergence}, where 
some frameworks in which the algorithm is not able to reach the optimal 
solution are also shown.
This limitation is addressed in~\cite{zhou2018adashift}, where an effective extension of Adam, namely AdaShift, is proposed.
In~\cite{nazari2019dadam}, the authors proposed an enhanced version of the distributed gradient method with adaptive estimates of first- and second-order gradient momenta.
\subsubsection*{Contribution}
The main contribution of this paper is the design of a new distributed
algorithm to solve online optimization problems for multi-agent learning over networks. 
This novel scheme builds on the recently proposed gradient tracking distributed algorithm.
Specifically, in the gradient tracking the agents update their local solution estimates using a 
consensus averaging scheme perturbed with a local variable representing a descent direction.
This variable is concurrently updated using a dynamic consensus scheme aiming at reconstructing 
the total cost function gradient in a distributed way.
Inspired by the centralized Adam algorithm, we accelerate the basic gradient tracking scheme 
by enhancing the descent direction resorting to first- and second-order momenta of the 
cost function gradient. The use of momenta turned out to be very effective in the centralized Adam 
to solve online optimization problems with a fast rate. Therefore, we design our novel gradient tracking with adaptive momentum
estimation (\algo/) distributed algorithm to solve online optimization problems
over networks. The algorithm relies on local estimators for the two momenta, in
which the total gradient is replaced by a (local) gradient tracker.
Although the intuition behind the construction of \algo/ is clear and consists
of mimicking the centralized Adam in a distributed setting by using a gradient
tracking scheme, its analysis presents several additional challenges with
respect to both the gradient tracking and Adam. Indeed, being the descent
direction a nonlinear combination of the local states updated through a
consensus averaging, the proof approach of the gradient tracking needs to
be carefully reworked. 
We provide an upper bound about the dynamic regret for strongly convex online
optimization problems. This bound consists of a constant term, related to the
initial conditions of the algorithm, and another term depending on the temporal
variations of both the optimal solution of the problem and the gradients of the
objective functions. Thus, if the latter variations are sublinear with respect
to time, then our bound about the dynamic regret is sublinear too. A similar
result is also guaranteed for an agent-specific dynamic regret. Moreover, we
show that in the static case our algorithm reaches the optimal solution with a
linear rate.
Finally, we perform extensive numerical simulations on three application 
scenarios from distributed machine learning: a classification problem via logistic 
regression, a source localization problem in smart sensor networks and an 
image classification task.
We show that \algo/ outperforms in all cases the current state-of-the-art algorithms in terms of convergence rate.
\subsubsection*{Organization and Notation}
The paper is organized as follows. 
In Section~\ref{sec:existing_algorithms} we recall the two algorithms
that inspired the novel distributed algorithm proposed in this paper.  In
Section~\ref{sec:gtadam} \algo/ is presented with its convergence properties
which are proved in Section~\ref{sec:analysis}. Finally,
Section~\ref{sec:experiments} shows numerical examples highlighting the
advantages of \algo/.

The vertical concatenation of the vectors $v_1$ and $v_2$ is $\col (v_1, v_2)$. 
We use $\diag(v)$ to denote the diagonal matrix with diagonal
elements given by the components of $v$. 
The Hadamard product is denoted with $\odot$, while the Kronecker product with
$\otimes$. The identity
matrix in $\R^{m\times m}$ is $I_m$, while $0_m$ is the zero matrix in
$\R^{m\times m}$. The column vector of $N$ ones is denoted by $1_N$ and 
we define $\one \triangleq 1_N \otimes I_n$.
The spectral radius of a square matrix $M$ is denoted as $\rho(M)$.

\section{Inspiring algorithms}
\label{sec:existing_algorithms}
In this section we briefly recall two existing algorithms that represent the
building blocks for \algo/. 

\subsection{Adam centralized algorithm} Adam~\cite{kingma2014adam} is an
optimization algorithm that solves problems in the form~\eqref{pb:problem} in a
\emph{centralized} computation framework. It is an iterative gradient-like procedure
in which, at each iteration $t$, a solution estimate $\xtm$ is updated
by means of a descent direction which is enhanced by
a proper use of the gradient history, \ie, through estimates of their first- and second-order momenta.
Specifically, the (time-varying) gradient $\gtm = \nabla \ftm(\xtm)$ of the
function drives two exponential moving average estimators.  The two estimates,
denoted by $\mtm$ and $\vtm$, represent, respectively, mean and variance
($1^{st}$ and $2^{nd}$ momentum) of the gradient sequence and are nonlinearly
combined to build the descent direction.
A pseudo-code of Adam algorithm is reported in Algorithm~\ref{table:adam}
in which $\alpha>0$ is the step-size, the constant $0<\epsilon \ll 1$ is introduced to guarantee
numerical robustness of the scheme, while the hyper-parameters $\beta_1, \beta_2 \in (0, 1)$
control the exponential-decay rate of the moving average dynamics.
\begin{algorithm}%
		\begin{algorithmic}
			\State initialization: $x^{0}$ arbitrary, $m^{0} = v^{0}=0$, $g^0=\nabla f^{0}(\x^0)$
			\For{$t=1,2\dots$}
			\State $\mt = \beta_1 \mtm + (1-\beta_1) \gtm$
			\State $\vt  = \beta_2 \vtm + (1-\beta_2) \gtm\odot \gtm$
			\State $\xt  = \xtm - \alpha \frac{\sqrt{1-\beta_2}}{1-\beta_1} \frac{\mt}{\sqrt{\vt+\epsilon}}$
			\State $\gt = \nabla \ft(\xt)$
			\EndFor
		\end{algorithmic}
	\caption{Adam}
	\label{table:adam}
\end{algorithm}

We point out that in the algorithm above the
ratio $\frac{\mt}{\sqrt{\vt+\epsilon}}$ is meant element-wise. Typical choices
for the algorithmic parameters are $\beta_1=0.9$, $\beta_2=0.999$, and
$\epsilon=10^{-8}$.

\subsection{Gradient tracking distributed algorithm}

The gradient tracking is a distributed algorithm mainly tailored to
\emph{static} instances of problem~\eqref{pb:problem}.
Agents in a network maintain and update two local states $\xitm$ and $\sitm$
by iteratively combining a perturbed average consensus and a dynamic tracking
mechanism.  Consensus is used to enforce agreement among the local agents'
estimates $\xitm$. The agreement is also locally perturbed in order to steer
the local estimates toward a (static) optimal solution of the problem.  The
perturbation is obtained by using a tracking scheme that allows agents to
locally reconstruct a progressively accurate estimate of the whole gradient of
the (static) cost function in a distributed way. A pseudo-code of the gradient
tracking distributed algorithm is reported in Algorithm~\ref{table:GT}, in which $\NN_i$ denotes the set of (in-)neighbors of agent $i$, while
$\alpha>0$ is the step-size. The protocol is shown from
the perspective of agent $i$ only.
\begin{algorithm}%
		\begin{algorithmic}
			\State initialization: $x_{i}^{0}$ arbitrary, $s_{i}^{0} = g_{i}^{0} = \nabla f_{i}(x_{i}^{0}) $ %
			\For{$t=1,2\dots$}
			\State $\xit  = \sum\limits_{j\in\NN_i} w_{ij}\xjtm - \alpha\sitm$
			\State $\git = \nabla f_{i}(\xit)$
			\State $\sit  = \sum\limits_{j\in\NN_i} w_{ij}\sjtm  + \git - \gitm$
			\EndFor
		\end{algorithmic}
	\caption{Gradient tracking (for agent $i$)}
	\label{table:GT}
\end{algorithm}

\section{Gradient tracking with adaptive momentum estimation}
\label{sec:gtadam}

In this section we present the main contribution of this paper, \ie, the
gradient tracking with adaptive momentum estimation (\algo/) distributed
algorithm.
\algo/ is designed to address in a distributed fashion
problem~\eqref{pb:problem}, taking inspiration both from Adam and from the gradient
tracking distributed algorithm.

Along the evolution of the algorithm, each agent $i$ maintains four local states:
\begin{enumerate}[label=(\roman*)]
	\item a local estimate $\xitm$ of the current optimal solution $\xstartm$;
	\item an auxiliary variable $\sitm$ whose role is to track the gradient of the whole cost function;
	\item an estimate $\mmitm$ of the $1^{st}$ momentum of $\sitm$;
	\item an estimate $\vitm$ of the $2^{nd}$ momentum of $\sitm$.
\end{enumerate}
The momentum estimates of $\sitm$ are initialized as $m_i^0=v_i^0=0$, while 
the tracker of the gradient is initialized as $\s_{i}^{0}=\nabla f_{i}^{0} (\x_{i}^{0})$.

The algorithm works as follows. At each time instant $t$, each agent $i$ performs the following operations
\begin{enumerate}[label=(\roman*)]
	\item it updates the moving averages $\mmitm$ and $\vitm$;
	\item it computes a weighted average of the solution estimates of its neighbors
	and, starting from this point, it uses the update direction $\frac{\mmit}{\sqrt{\vit+\epsilon}}$ 
	to compute the new solution estimate $\xit$;
	\item it updates the local gradient tracker $\sitm$ via a ``dynamic consensus" mechanism.
\end{enumerate} 
A pseudo-code of \algo/ is reported in Algorithm~\ref{table:adamGT}. 
\begin{algorithm}%
		\begin{algorithmic}
			\State initialization: $x_{i}^{0}$ arbitrary, $s_{i}^{0} \!=\! g_{i}^{0} \!=\! \nabla f_{i}^{0}(\x_{i}^{0})$, $m_{i}^{0} \!=\! v_{i}^{0} \!=\! 0$ 
			\For{$t=1,\dots,T$}
			\State $\mmit = \beta_1 \mmitm + (1-\beta_1) \sitm$
			\State $\vit  = \min\{\beta_2 \vitm + (1-\beta_2) \sitm \odot \sitm,G\}$
			\State $\xit  = \sum\limits_{j\in\NN_i} w_{ij}\xjtm - \alpha\frac{\mmit}{\sqrt{\vit+\epsilon}}$\smallskip
			\State $\git = \nabla \fit(\xit)$
			\State $\sit  = \sum\limits_{j\in\NN_i} w_{ij}\sjtm  + \git - \gitm$
			\EndFor
		\end{algorithmic}
	\caption{\algo/ (for agent $i$)}
	\label{table:adamGT}
\end{algorithm}

Some remarks are in order. The algorithm proposed in this paper is different from~\cite{nazari2019dadam}. In fact, although they both use a similar strategy involving first- and second-order momenta, in that work only local gradients are considered, without resorting to any tracking mechanism.
Note that a saturation term $G \gg 0$ is introduced in the update of $\vitm$, where the $\min$ operator is to be intended element-wise.
The value of $G$ guarantees a bound for the scaling factor that multiplies the descent direction. 
Such a bound will turn out to be important for analysis purposes. 
We suggest to take it proportional to the initial estimates $v_i^0$.

We now state some regularity requirements on problem~\eqref{pb:problem}.
We first make two assumptions regarding each $\fitm$.
\begin{assumption}[Lipschitz continuous gradients]\label{assumption:lipschitz}
	The functions $\fitm$ have $L$-Lipschitz continuous gradients for all $i \in 
	\set$ and $t \ge 0$.%
\end{assumption}
\begin{assumption}[Strong convexity]\label{assumption:strong}
	The functions $\fitm$ are $\mu$-strongly convex for all $i \in \set$ and $t \ge 0$.%
\end{assumption}
We point out that, in light of Assumption~\ref{assumption:strong}, the minimizer 
$\xstartm$ is unique for all $t\ge 0$. 
Finally, the following characterizes the communication structure.
\begin{assumption}[Network Structure]\label{assumption:network}
	The weighted graph $\GG$ %
	is connected with doubly stochastic matrix
    $\mathcal{W}$ %
	stochastic. %
\end{assumption}
In order to analyze \algo/, we rewrite it into an aggregate form. Given
the variables $\{\xitm\}_{i=1}^N$, we define
$\xtm \triangleq \col(\x_{1}^{t}, \dots, \x_{N}^{t})$ and their average as
$\avgxtm \triangleq \frac{1}{N}\sum_{i=1}^N \xitm$. 
Similar definitions apply to the quantities $\mtm, \vtm, \dtm, \gtm, \stm$ and their averages
$\avgmtm, \avgvtm, \avgdtm, \avgstm$. With these definitions at hand, \algo/ can be rephrased from a global perspective as
\begin{subequations}\label{eq:global}
	\begin{align}
	\mt &= \beta_1 \mtm + (1-\beta_1) \stm\label{eq:m_global}\\
	\vt &= \min\{\beta_2 \vtm + (1-\beta_2) \stm \odot \stm,\one G\}\label{eq:v_global} \\
	\dt & = (\Vt + \epsilon I)^{-1/2} \mt%
	\label{eq:d_global} \\
	\xt &= W\xtm - \alpha\dt\label{eq:x_global}\\
	\st &= W\stm  + \gt - \gtm, \label{eq:s_global}
	\end{align}	
\end{subequations}
where we set $W \triangleq \mathcal{W} \otimes I_n$, $\Vtm \triangleq \text{diag}(\vtm)$, and $\avgVtm \triangleq \text{diag}(\avgvtm)$.
Moreover, the averaged quantities of~\eqref{eq:global} satisfy
\begin{subequations}\label{eq:avg}
	\begin{align}
	\avgmt &= \beta_1 \avgmtm + (1-\beta_1) \avgstm\label{eq:m_avg}
	\\
	\avgvt &= \min\{\beta_2 \avgvtm + (1-\beta_2) \avgstm \odot \avgstm,G\}\label{eq:v_avg} 
	\\
	\avgdt &= \tfrac{1}{N}\one^\top \dt \label{eq:d_avg}
	\\
	\avgxt &= \avgxtm - \alpha \avgdt\label{eq:x_avg}
	\\
	\avgst &= \avgstm  + \tfrac{1}{N}\sum_{i=1}^N (\git - \gitm). \label{eq:s_avg}
	\end{align}	
\end{subequations}
Our analysis is based on studying the aggregate dynamical evolution of the following: 
average first momentum $\| \avgmtm \|$, average tracking momentum difference $\|\avgstm \!-\! \avgmtm\|$, 
first momentum error $\|\mtm \!-\! \one\avgmtm\|$, gradient tracking error $\| \stm \!-\! \one\avgstm\|$, 
consensus error $\| \xtm \!-\! \one\avgxtm\|$ and solution error $\| \avgxtm \!-\! \xstartm\|$.
Let $y^t$ be the vector stacking the above quantities at iterations $t$
\begin{align}\label{eq:yt} 
y^t \triangleq 
\begin{bmatrix}
\|\avgmtm\|
\\
\|\avgstm - \avgmtm\|
\\
\|\mtm - \one\avgmtm\|
\\
\|\stm - \one \avgstm \|
\\
\|\xtm - \one \avgxtm\|
\\
\|\avgxtm - \xstartm\|
\end{bmatrix}.
\end{align}
	Notice that, due to the distributed context and no assumptions on the boundedness of the gradients, we need to take into account all these quantities to study the convergence.
	Let us introduce two useful variables that will be used to provide the main result of the paper, namely
	\begin{align}
	\begin{split}
	\eta^t &
	\triangleq \sup_i \sup_{x\in\R^{n}} \|\nabla \fit(x)\!-\!\nabla \fitm(x)\|,
	\\
	\zeta^t & \triangleq \|\xstart\!-\!\xstartm\|.
	\end{split}
	\label{eq:bound_optimum_gradient}
	\end{align}
Then, the main result of this paper is stated as follows.
\begin{theorem}\label{th:convergence}
	Consider \algo/ as given in Algorithm~\ref{table:adamGT}.
	Let Assumptions~\ref{assumption:lipschitz},~\ref{assumption:strong},
	and~\ref{assumption:network} hold.
	Then, for a sufficiently small step-size $\alpha>0$, there exists a constant $0 < \tilde{\rho} <1$, such that
	\begin{align}
	R_T \leq \frac{L\lambda^2}{2}\left(\frac{\norm{y^0}^2}{1 - \tilde{\rho}^2} + 2\norm{y^0}S_T + Q_T\right),\label{eq:result_1} 
	\end{align}
	where $R_T$ is defined in~\eqref{eq:regret}, the constant $\lambda$ is defined in the proof (cf.~\eqref{eq:lambda}) and 
	\begin{subequations}\label{eq:U_T_Q_T}
		\begin{align}
			S_T &\triangleq \sum_{t=1}^T\sum_{k=0}^{t-1}\tilde{\rho}^{t+k}\left(\frac{N+1}{\sqrt{N}}\norm{\eta^{t-k-1}} + \norm{\zeta^{t-k-1}}\right) 
			\\
			Q_T &\triangleq \sum_{t=1}^T\left(\sum_{k=0}^{t-1}\tilde{\rho}^{k}\frac{N+1}{\srN}\norm{\eta^{t-k-1}} + \norm{\zeta^{t-k-1}}\right)^2,
		\end{align}
	\end{subequations}
	where $\eta^t, \zeta^t$ are defined in~\eqref{eq:bound_optimum_gradient} and we assume that are finite.
Moreover, it holds %
\begin{align}
\lim_{T\to\infty} \sum_{i=1}^N \norm{\x_i^T - \bar{\x}^T}^2\leq \tfrac{\lambda^2}{(1-\tilde{\rho})^2}\max_t\left\{\tfrac{N^2 + 1}{N}\eta^t + \zeta^t\right\}.\label{eq:result_3}%
\end{align}
\end{theorem}

As it requires several intermediate results, the proof of Theorem~\ref{th:convergence} is carried out in Section~\ref{sec:analysis}.

	There is evidence in the literature, see, e.g.,~\cite{shahrampour2017distributed,mokhtari2016online,dall2020optimization,li2020online,notarnicola2020personalized,li2022survey}, that the bound on the dynamic regret cannot be sublinear with respect to $T$. As stated, e.g., in~\cite{li2022survey}, when the objective functions are strongly convex and have bounded gradients, the bound on dynamic regret is $O(1 + \eta^t)$. Our work does not assume gradient boundedness and, thus, our bound has additional terms due to variations over time of the gradients. Specifically, Theorem~\ref{th:convergence} shows that $R_T$ is upper bounded by a constant depending on the initial conditions and by other two terms.
	The latters involve $S_T$ and $Q_T$, which capture the time-varying nature of the problem itself. Indeed, suppose that the problem varies linearly, i.e., there exists $C > 0$ so that $\eta^t, \zeta^t \leq C$ for all $t \ge 0$. Then, being $\tilde{\rho} \in (0,1)$, we can exploit the geometric series properties to write the following 
\begin{align*}
	S_T &\leq \frac{(N+\sqrt{N} +1)(\tilde{\rho} - \tilde{\rho}^{T+1})(1 - \tilde{\rho}^T)C}{\sqrt{N}(1 -\tilde{\rho})^2}
	\\
	Q_T &\leq \frac{(N + \srN +1)^2(1-\tilde{\rho}^{T})^2C^2T}{N(1 - \tilde{\rho})^2}.
\end{align*}
In this case,~\eqref{eq:result_1} ensures that the average regret $R_T/T$ asymptotically approaches a constant when $T \to \infty$, specifically
\begin{align*}
	\lim_{T \to \infty} \frac{R_T}{T} \leq \frac{L\lambda^2(N^2 + \srN +1)^2C^2}{2N(1 - \tilde{\rho})^2}.
\end{align*}
The key point of the proof consists in showing that the error vector $y^t$ (see~\eqref{eq:yt}) evolves according to a linear system with state matrix $A(\alpha)$ (whose entries depend on the problem parameters such, e.g., the strong convexity function or the network connectivity) which is perturbed by an input $q^t$ related to the variations of the problem over time (see~\eqref{eq:linear_system}). Notice that the parameter $\tilde{\rho}$ is related to the spectral radius of $A(\alpha)$ and, thus, depends also on the network topology.

\subsubsection*{Agent Regret}
We may also consider a regret for each agent $i$ defined as
  $R_{T,i} \triangleq\sum_{t=1}^T \ftm(\xitm) -\sum_{t=1}^T\ftm(\xstartm)$.
\begin{corollary}\label{cor:agent_regret}
  Under the same assumptions of Theorem~\ref{th:convergence}, for all $i \in \set$, it holds
	\begin{align*}
	R_{T,i}	&\leq 2L\lambda^2\left(\frac{\norm{y^0}^2}{1-\tilde{\rho}^2} +  2\norm{y^0}S_T + Q_T\right),
	\end{align*}
	where $\lambda$, $\tilde{\rho}$, $S_T$, and $Q_T$ are defined as in Theorem~\ref{th:convergence}.%
\end{corollary}
The proof is given in Appendix~\ref{sec:proof_cor_agent_regret}.

\subsubsection*{Static set-up}
We provide an additional corollary of Theorem~\ref{th:convergence} asserting theoretical guarantees in a static scenario. Specifically, for this special case the \algo/ distributed algorithm converges to the optimal solution with a linear rate.
	\begin{corollary}[Static set-up]\label{cor:static}
		Under the same assumptions of Theorem~\ref{th:convergence}, if additionally holds $\ftm = f$ for all $t \ge 0$, then, for a sufficiently small step-size $\alpha > 0$, there exists a constant $0 < \tilde{\rho} < 1$ such that
		\begin{equation}
		f(\avgxtm) - f(\xstartm) \leq \tilde{\rho}^{2t}\frac{L\lambda^2}{2}\norm{y^0}^2, 
		\label{eq:static_linear_convergence}
		\end{equation}
		where the constant $\lambda$ is defined in~\eqref{eq:lambda}.%
	\end{corollary}
	
	The proof is given in Appendix~\ref{sec:proof_cor_static}.

\section{Analysis}\label{sec:analysis}
This section is devoted to provide the proof of Theorem~\ref{th:convergence}.

\subsection{Preparatory Lemmas}
We now give a sequence of intermediate results, providing proper bounds
on the components of $y^t$ (defined in~\eqref{eq:yt}), that are then used as building blocks for proving
Theorem~\ref{th:convergence}. %
\begin{lemma}[Average first momentum magnitude]\label{lemma:m_bound}
	Let Assumption~\ref{assumption:lipschitz} holds. Then, for all $t\geq 0$, it holds 
	\begin{align}
	\norm{\avgmt} 
	& \! \leq \!
	\beta_1 \norm{\avgmtm} 
	\!+\!
	\tfrac{(1\!-\!\beta_1)L}{\srN}\! \norm{\xtm \!-\! \one \avgxtm}
	\!+\! (1\!-\!\beta_1)L\! \norm{\avgxtm \!-\! \xstartm}\!.\notag%
	\end{align}
\end{lemma}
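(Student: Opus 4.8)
The plan is to start from the averaged first-momentum update \eqref{eq:m_avg}, namely $\avgmt = \beta_1\avgmtm + (1-\beta_1)\avgstm$, apply the triangle inequality to obtain $\|\avgmt\| \le \beta_1\|\avgmtm\| + (1-\beta_1)\|\avgstm\|$, and then devote the bulk of the argument to bounding $\|\avgstm\|$ by the two desired quantities. Matching the target coefficients, it suffices to show $\|\avgstm\| \le \frac{L}{\srN}\|\bxtm - \one\avgxtm\| + L\|\avgxtm - \x_{\star,t-1}\|$, after which multiplying by $(1-\beta_1)$ and adding $\beta_1\|\avgmtm\|$ reproduces the claim verbatim.

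The key ingredient is the gradient-tracking identity $\avgst = \frac{1}{N}\sum_{i=1}^N \nabla f_{i,t}(\xit)$. First I would establish this by induction on $t$ using \eqref{eq:s_avg} together with the initialization $\s_{i,0} = \nabla f_{i,0}(\x_{i,0})$; double stochasticity of $W$ (Assumption~\ref{assumption:network}) is exactly what makes the consensus term cancel in the averaged dynamics and is already reflected in \eqref{eq:s_avg}. Evaluated at $t-1$ this yields $\avgstm = \frac{1}{N}\sum_i \nabla f_{i,t-1}(\xitm)$.

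Next, I would exploit first-order optimality of the minimizer: since $\x_{\star,t-1}$ minimizes the smooth sum $\sum_i f_{i,t-1}$, we have $\sum_i \nabla f_{i,t-1}(\x_{\star,t-1}) = 0$, so I can subtract this zero term and write $\avgstm = \frac{1}{N}\sum_i [\nabla f_{i,t-1}(\xitm) - \nabla f_{i,t-1}(\x_{\star,t-1})]$. Applying the triangle inequality and $L$-Lipschitz continuity of the gradients (Assumption~\ref{assumption:lipschitz}) gives $\|\avgstm\| \le \frac{L}{N}\sum_i \|\xitm - \x_{\star,t-1}\|$. Splitting $\xitm - \x_{\star,t-1} = (\xitm - \avgxtm) + (\avgxtm - \x_{\star,t-1})$ separates the bound into a consensus-error part $\frac{L}{N}\sum_i \|\xitm - \avgxtm\|$ and a solution-error part $L\|\avgxtm - \x_{\star,t-1}\|$.

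The final, slightly delicate, step converts the $\ell_1$-type sum of per-agent consensus errors into the $\ell_2$ norm $\|\bxtm - \one\avgxtm\|$ appearing in the statement, picking up the $1/\srN$ factor. Since $\|\bxtm - \one\avgxtm\|^2 = \sum_i \|\xitm - \avgxtm\|^2$, Cauchy--Schwarz gives $\sum_i \|\xitm - \avgxtm\| \le \srN\,\|\bxtm - \one\avgxtm\|$, hence $\frac{L}{N}\sum_i \|\xitm - \avgxtm\| \le \frac{L}{\srN}\|\bxtm - \one\avgxtm\|$. Assembling the two parts yields the desired bound on $\|\avgstm\|$, and substituting it into the opening triangle-inequality bound finishes the proof. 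I expect the gradient-tracking identity (and the injection of the zero gradient via optimality) to be the conceptual crux, while the Cauchy--Schwarz conversion producing $1/\srN$ is the only nonobvious elementary manipulation.
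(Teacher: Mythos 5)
Your proof is correct and follows essentially the same route as the paper's: triangle inequality on \eqref{eq:m_avg}, the tracking identity $\avgstm = \frac{1}{N}\sum_i \nabla f_{i,t-1}(\xitm)$, injection of the zero gradient at $\x_{\star,t-1}$, Lipschitz continuity, and the $\sqrt{N}$ norm conversion. The only (immaterial) difference is the order of the last two steps --- you split $\xitm - \x_{\star,t-1}$ into consensus and solution errors before converting the sum to an $\ell_2$ norm, while the paper converts first and then adds and subtracts $\one\avgxtm$ inside the norm; your explicit inductive justification of the tracking identity is a small bonus in rigor.
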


The proof is given in Appendix~\ref{sec:proof_lemma_1}.
\begin{lemma}[First momentum error]\label{lemma:m}
	For all $t\geq 0$, it holds 
	\begin{align*}
	\norm{\mt - \one\avgmt} & \leq \beta_1\norm{ \mtm - \one\avgmtm} + (1-\beta_1)\norm{ \stm -\one \avgstm}.%
	\end{align*}
\end{lemma}

The proof of Lemma~\ref{lemma:m} follows by combining~\eqref{eq:m_global} and~\eqref{eq:m_avg} with the triangle inequality.
\begin{lemma}[Input signal error]\label{lemma:d}
	For all $t\geq 0$, it holds 
	\begin{align*}
	& \|\dt - \one \avgdt \| 
	\\
	& \leq \tfrac{\beta_1\srN}{\sqrt{\epsilon}}\norm{\avgmtm} +\tfrac{\beta_1}{\sqrt{\epsilon}}\norm{\mtm -\one\avgmtm} 
	+\tfrac{(1-\beta_1)}{\sqrt{\epsilon}}\norm{\stm - \avgstm} 
	\\
	&\hspace{3ex} +  \tfrac{(1-\beta_1)L}{\sqrt{\epsilon}}\norm{\xtm - \one \avgxtm} 
	+ \tfrac{(1-\beta_1)\beta_1L\srN}{\sqrt{\epsilon}}\norm{\avgxtm - \xstartm}.%
	\end{align*}
\end{lemma}

The proof is given in Appendix~\ref{sec:proof_lemma_3}.
\begin{lemma}[Tracking error]\label{lemma:s}
	Let Assumptions~\ref{assumption:lipschitz},~\ref{assumption:strong},	and~\ref{assumption:network} hold.
	Then, for all $t\geq 0$, it holds 
	\begin{align*}
	&\|\st - \one \avgst \|\leq \left(\sigma_W + \alpha\tfrac{ 2(1-\beta_1)L}{\sqrt{\epsilon}}\right)\norm{\stm - \one\avgstm} 
	\\
	&\hspace{3ex}+ \alpha\tfrac{ 2 \beta_1L\srN}{\sqrt{\epsilon}}\norm{\avgmtm}+\alpha\tfrac{ 2\beta_1L}{\sqrt{\epsilon}}\norm{ \mtm - \one\avgmtm}
	\\
	&\hspace{3ex}+\left(\!L\norm{W\!-\!I} + \alpha\tfrac{ 2(1-\beta_1)\beta_1L^2}{\sqrt{\epsilon}} \right)\norm{\xtm-\one\avgxtm} 
		\notag\\
	&\hspace{3ex}+\alpha\tfrac{ (1-\beta_1)(1+\beta_1)L^2\srN}{\sqrt{\epsilon}}\norm{\avgxtm - \xstartm}	+ \srN\eta^t.
	\end{align*}%
	where $\sigma_W \in (0,1)$ is the spectral radius of $W-\ones$ and $\eta^t$ has been defined in~\eqref{eq:bound_optimum_gradient}.%
\end{lemma}

The proof is given in Appendix~\ref{sec:proof_lemma_4}.

\begin{lemma}[Consensus error]\label{lemma:x}
	Let Assumptions~\ref{assumption:lipschitz}, and~\ref{assumption:network} hold. 
	Then, for all $t\geq 0$, it holds 
	\begin{align*}
	&\|\xt - \one \avgxt\| \leq \left(\sigma_W + \alpha\tfrac{(1-\beta_1)L}{\sqrt{\epsilon}}\right)\|\xtm - \one \avgxtm\| 
	\\	
&\hspace{3ex}+\alpha\tfrac{\beta_1\srN}{\sqrt{\epsilon}}\norm{\avgmtm} +\alpha\tfrac{\beta_1}{\sqrt{\epsilon}}\norm{\mtm -\one\avgmtm} 
	\\
	&\hspace{3ex} +\alpha\tfrac{(1-\beta_1)}{\sqrt{\epsilon}}\norm{\stm - \avgstm} 
+ \alpha\tfrac{(1-\beta_1)\beta_1L\srN}{\sqrt{\epsilon}}\norm{\avgxtm - \xstartm}.%
	\end{align*}
\end{lemma}

The proof is given in Appendix~\ref{sec:proof_lemma_5}.

\begin{lemma}[Tracking momentum difference magnitude]\label{lemma:z}
	Let Assumptions~\ref{assumption:lipschitz}, \ref{assumption:strong},	and \ref{assumption:network} hold. Then, for all $t\geq 0$, it holds 
	\begin{align*}
	\|\avgst & - \avgmt\|
	\leq\beta_1\|\avgstm - \avgmtm\| + \alpha\tfrac{ \beta_1L}{\sqrt{\epsilon}}\norm{\avgmtm}
	\\
	&\hspace{3ex} +   \alpha\tfrac{2\beta_1L}{\sqrt{\epsilon}\srN}\| \mtm - \one\avgmtm \| + \alpha\tfrac{ L}{\sqrt{\epsilon}\srN}\norm{\stm - \one\avgstm}
	\\
	&\hspace{3ex}+\left(\sigma_W\frac{L}{\srN} +\tfrac{L}{\srN} + \alpha\tfrac{(1-\beta_1)L^2}{\sqrt{\epsilon}\srN}\right)\norm{\xtm - \one \avgxtm}
	\\
	&\hspace{3ex}+ \alpha\tfrac{(1-\beta_1)L^2}{\sqrt{\epsilon}}\norm{\avgxtm - \xstartm}+\tfrac{1}{\srN}\eta^{t}.%
	\end{align*}
\end{lemma}

The proof is given in Appendix~\ref{sec:proof_lemma_6}.

\begin{lemma}[Solution error]\label{lemma:xbar}
	Let Assumptions~\ref{assumption:lipschitz},
	\ref{assumption:strong}, and \ref{assumption:network} hold. Then, for all
	$t\geq 1$, it holds
	\begin{align*}
	& \|\avgxt - \xstart\|
	\\
	&
	\leq (1-\alpha\delta)\|\avgxtm  \!-\!  \xstartm\| + \alpha\tfrac{\beta_1}{\sqrt{\epsilon}}\|\avgstm  \!-\! \avgmtm\|
	+ \alpha\tfrac{ L}{\sqrt{\epsilon}\srN}\|\xtm \!-\! \one\avgxtm\| 
	\\
	&\hspace{3ex} +\alpha\tfrac{\beta_1}{\sqrt{\epsilon}\srN}\norm{\mtm-\one\avgmtm} 
	+ \alpha\tfrac{(1-\beta_1)}{\sqrt{\epsilon}\srN}\norm{\stm-\one\avgstm} + \zeta^t,
	\end{align*}
	where $\zeta^t$ is defined in~\eqref{eq:bound_optimum_gradient} and
$\delta \triangleq \min\left\{\tfrac{\mu}{\sqrt{\epsilon+G}}, \tfrac{L}{\sqrt{\epsilon}}\right\}$.%
\end{lemma}

The proof is given in Appendix~\ref{sec:proof_lemma_7}.

\subsection{Proof of Theorem 1}
\label{sec:proof}

By recalling the definition of $y^t$ given in~\eqref{eq:yt}
and combining Lemma~\ref{lemma:m_bound},~\ref{lemma:m},~\ref{lemma:s},~\ref{lemma:x}, \ref{lemma:z}, \ref{lemma:xbar}, it is possible to write
\begin{align}\label{eq:linear_system}
	y^{t+1} \leq A(\alpha) y^{t} + q^t,
\end{align}
where 
$q^t \triangleq \col\left(0, \frac{1}{\sqrt{N}}\eta^{t}, 0, \sqrt{N}\eta^t, 0, \zeta^t\right)$.
The matrix $A(\alpha)$ can be decomposed in
$A(\alpha)  \triangleq A_0 + \alpha E$,
with
\begin{align*}
A_0 &\triangleq \begin{bmatrix}	
\beta_1& 0& 0& 0& \beta_1c_1& (1-\beta_1)L\\
0& \beta_1& 0& 0& \sigma_Wc_1 + c_1& 0\\
0& 0&\beta_1& 1-\beta_1& 0& 0\\
0&0&0&\sigma_W&c_2& 0\\
0& 0&0& 0& \sigma_W& 0\\
0& 0&0& 0& 0& 1
\end{bmatrix}
\end{align*}
and
\begin{align*}
\!E\!\!\triangleq\!\!\begin{bmatrix}	
\!\!\!0& 0& 0& 0& 0& 0\!\!\!\!
\\
\!\!\tfrac{\beta_1L}{\sqrt{\epsilon}} &0& \tfrac{2\beta_1c_1}{\sqrt{\epsilon}} &\tfrac{c_1}{\sqrt{\epsilon}} &\tfrac{(1-\beta_1)c_1L}{\sqrt{\epsilon}}& \tfrac{(1-\beta_1)L^2}{\sqrt{\epsilon}}\!\!\!\!
\\
\!\!0& 0&0& 0& 0& 0\!\!\!\!
\\
\!\!\tfrac{2\beta_1L\srN}{\sqrt{\epsilon}}&0&\tfrac{2\beta_1L}{\sqrt{\epsilon}} &  \tfrac{2(1-\beta_1)L}{\sqrt{\epsilon}}& c_3& c_4\!\!\!\!
\\
\!\!\tfrac{\beta_1\srN}{\sqrt{\epsilon}}& 0& \tfrac{\beta_1}{\sqrt{\epsilon}}& \tfrac{1-\beta_1}{\sqrt{\epsilon}}& 0& c_5\!\!\!\!
\\
\!\!0&\tfrac{\beta_1}{\sqrt{\epsilon}}& \tfrac{\beta_1}{\sqrt{\epsilon}\srN}& 0& \tfrac{(1-\beta_1)}{\sqrt{\epsilon}\srN}& -\delta&
\!\!\!\!\end{bmatrix}\!\!,
\end{align*}%
where we used the following shorthands
\begin{align*}
 c_1 &\triangleq \tfrac{L}{\srN}, \quad c_2 \triangleq L\|W-I\|, \quad c_3 \triangleq \tfrac{2(1-\beta_1)\beta_1L^2}{\sqrt{\epsilon}},
 \\
  c_4 &\triangleq \tfrac{(1-\beta_1)(1+\beta_1)L^2\sqrt{N}}{\sqrt{\epsilon}}, \quad c_5 \triangleq \tfrac{(1-\beta_1)\beta_1L\srN}{\sqrt{\epsilon}}.
\end{align*}%
Being $A_0$ triangular, it is easy to see that its spectral radius is $1$ since both $\beta_1$ and $\sigma_W$ are in $(0, 1)$. We want to study how the perturbation matrix $\alpha E$ affects the simple eigenvalue $1$ of $A_0$. Hence, we denote by $\eig(\alpha)$ such eigenvalue of $A(\alpha)$ as a function of $\alpha$. Call $w$ and $v$ respectively the left and right eigenvectors of $A_0$ associated to the eigenvalue $1$, then
	$ w=\col\left(0, 0, 0, 0, 0, 1\right)$
	and
	$v=\col\left(L, 0, 0, 0, 0, 1\right)$.
Since the eigenvalue $1$ is simple, from~\cite[Theorem~6.3.12]{horn2012matrix} it holds
\begin{align*}
\frac{d\eig(\alpha)}{d\alpha}\bigg|_{\alpha=0} = \frac{w^\top Ev}{w^\top v} = -\delta < 0.
\end{align*}
Then, by continuity of eigenvalues with respect to the matrix entries, $\eig(\alpha)$ is strictly less than $1$ for sufficiently small $\alpha>0$. 
Then, it is always possible to choose $\alpha>0$ so as the remaining eigenvalues stay in the unit circle. Therefore, the spectral radius is $\rho(A(\alpha)) < 1$. Moreover, since $A(\alpha)$ and $q^t$ have only non-negative entries, one can use~\eqref{eq:linear_system} to write
\begin{equation}\label{eq:y_evolution}
	y^{t} \leq A(\alpha)^{t} y^0 + \sum_{k=0}^{t-1} A(\alpha)^{t-1-k}q^{k}.
\end{equation}
From~\cite[Lemma~5.6.10]{horn2012matrix}, we have that for any $\gamma > 0$, there exists a matrix norm, say $\vertiii{\cdot}_\ped$, such that
\begin{align}\label{eq:inequaility_horn}
	\vertiii{A(\alpha)}_\ped \leq \rho(A(\alpha)) + \gamma.
\end{align}
Let us pick $\gamma \in (0, 1 - \rho(A(\alpha)))$ and define $\tilde{\rho} \triangleq \rho(A(\alpha)) + \gamma$.
Then, in light of~\eqref{eq:inequaility_horn} it holds $\vertiii{ A(\alpha)}_\ped \leq \tilde{\rho} < 1$. 
Moreover, by applying~\cite[Theorem~5.7.13]{horn2012matrix}, there exists a vector norm $\| \cdot\|_\ped$ 
such that $\|Mv\|_\ped \leq \vertiii{M}_\ped\|v\|_\ped$ for any matrix $M \in \R^{6 \times 6}$ and $v \in \R^6$. 
Hence, we can manipulate~\eqref{eq:y_evolution} taking the norm and using the triangle inequality to write
\begin{align}\notag
	\norm{y^{t}}_\ped 
	& \leq 
	\norm{A(\alpha)^{t}y^0}_\ped + \norm{\sum_{k=0}^{t-1} A(\alpha)^{t-1-k}q^{k}}_\ped
	\\
	& \leq \tilde{\rho}^{t}\norm{y^0}_\ped + \sum_{k=0}^{t-1}\tilde{\rho}^{k}\norm{q^{t-1-k}}_\ped,
	\label{eq:bound_y}
\end{align}
which shows that first term decreases linearly with rate $\tilde{\rho} < 1$ while the second one is bounded. By using the Lipschitz continuity of the gradients of $\ftm$ (cf. Assumption~\ref{assumption:lipschitz}), we have
\begin{align}
f^t(\avgxtm) - f^t(\xstartm) \leq \tfrac{L}{2}\|\avgxtm - \xstartm\|^2 \stackrel{(a)}{\leq} \frac{L}{2}\norm{y^t}^2,\label{eq:f_minus_fstar}
\end{align}
where in $(a)$ we use the fact that $\norm{\avgxtm - \xstartm}$ represents a component of $y^t$ 
leading to the trivial bound $\norm{\avgxtm - \xstartm} \leq \norm{y^t}$. Recalling that all norms are equivalent on finite-dimensional vector spaces, there always exist $\lambda_1 > 0$ and $\lambda_2 > 0$ such that%
\begin{subequations}%
\begin{align}%
	\norm{\cdot} &\leq \lambda_1\norm{\cdot}_\ped\label{eq:lambda_1}
	\\
	\norm{\cdot}_\ped &\leq \lambda_2\norm{\cdot}.\label{eq:lambda_2}
\end{align}
\end{subequations}
Thus, by applying~\eqref{eq:lambda_1}, we bound~\eqref{eq:f_minus_fstar} as
\begin{align*}
	f^t(\avgxtm) - f^t(\xstartm) \leq \frac{L\lambda_1}{2}\norm{y^t}^2_\ped,
\end{align*}
which, combined with the definition of $R_T$ (cf.~\eqref{eq:regret}) and the result~\eqref{eq:bound_y}, leads to 
\begin{align}
	R_T &\leq  \frac{L\lambda_1^2}{2}\!\bigg(\!\sum_{t=1}^T  \tilde{\rho}^{2t}\norm{y^0}_\ped^2 + 2\!\norm{y^0}_\ped\sum_{t=1}^T\sum_{k=0}^{t-1}\tilde{\rho}^{t+k}\norm{q^{t-1-k}}_\ped  
	\notag\\
	&\hspace{3.8cm}
	+ \sum_{t=1}^T\bigg(\sum_{k=0}^{t-1}\tilde{\rho}^{k}\norm{q^{t-1-k}}_\ped\bigg)^2\! \bigg)
	\notag\\
	&\stackrel{(a)}{\leq} \frac{L\lambda_1^2\lambda_2^2}{2}\!\bigg(\!\frac{\norm{y^0}^2}{1 - \tilde{\rho}^2} + 2\!\norm{y^0}\sum_{t=1}^T\sum_{k=0}^{t-1}\tilde{\rho}^{t+k}\norm{q^{t-1-k}}  
	\notag\\
	&\hspace{3.3cm}
	+ \sum_{t=1}^T\bigg(\sum_{k=0}^{t-1}\tilde{\rho}^{k}\norm{q^{t-1-k}}\bigg)^2 \!\bigg),
\end{align}
where in $(a)$ we use the geometric property series and the relation~\eqref{eq:lambda_2}. 
The proof follows by using the triangle inequality, the definitions of $U_T$ and $Q_T$ (cf.~\eqref{eq:U_T_Q_T}), and by setting
\begin{align}
	\lambda \triangleq \lambda_1\lambda_2.\label{eq:lambda}
\end{align}
Finally, in order to prove~\eqref{eq:result_3}, we notice that 
$	 \sum_{i=1}^N \norm{\x_i^T - \bar{\x}^T}^2 \leq \norm{y^T}^2
	\leq \lambda_1^2\norm{y^T}_\ped^2$,
in which we apply~\eqref{eq:lambda_1}. 
By applying the bound~\eqref{eq:bound_y} for $t = T$, we get
\begin{align*}
	\norm{y^{T}}_\ped &\leq \tilde{\rho}^{T}\norm{y^0}_\ped + \sum_{k=0}^{T-1}\tilde{\rho}^{k}\norm{q^{T-k-1}}_\ped.
\end{align*}
The first term of the latter inequality vanishes as $T \to \infty$, while the second one can be bounded by relying on geometric series property and $\max_k\{\norm{q^k}^2\}$. By exploiting these arguments, we can write
\begin{align}
	\hspace{-0.2cm}
	\lim_{T \to \infty} \sum_{i=1}^N \norm{\x_i^T - \bar{\x}^T}^2
	&
	\!\leq\! \tfrac{\lambda_1^2}{(1 \!-\! \tilde{\rho})^2}\max_{t}\left\{\norm{q^{t}}_\ped^2\right\}
 \notag\\
 &
\!\stackrel{(a)}{\leq}\! \tfrac{\lambda^2}{(1 \!-\! \tilde{\rho})^2}\max_{t}\left\{\norm{q^{t}}^2\right\}\!\!.
\label{eq:lim_C}
\end{align}
where in $(a)$ we apply~\eqref{eq:lambda_2} and the definition~\eqref{eq:lambda} of $\lambda$. 
The result~\eqref{eq:result_3} follows by noticing that 
\begin{align*}
	\max_{t}\{\norm{q^{t}}^2\} 
	\!=\! 
	\max_t\Big\{\!\tfrac{N^2 + 1}{N}\eta^t + \zeta^t\Big\}.
\end{align*}

\section{Numerical Experiments}
\label{sec:experiments}

In this section we consider three multi-agent distributed learning problems to show 
the effectiveness of \algo/.
The first scenario regards the computation of a linear classifier via a regularized logistic 
regression function for a set of points that change over time.
The second scenario involves the localization of a moving target.
The third example is a stochastic optimization problem arising in a distributed 
image classification task. In all the examples, the parameters of \algo/ are chosen as $\beta_1=0.9$, $\beta_2=0.999$, and 
$\epsilon=10^{-8}$. %
Moreover, we compare \algo/ with the gradient tracking distributed algorithm (\GT/) (cf.~Algorithm~\ref{table:GT}
in Section~\ref{sec:existing_algorithms}), the distributed gradient descent
(\DGD/) (see~\cite{nedic2009distributed}), and the distributed Adam (\DADAM/) (see~\cite{nazari2019dadam}) described by
\begin{align*}
	\mmit &= \beta_1 \mmitm + (1-\beta_1)\nabla\fitm(\xitm)
	\\
	\vit &= \beta_2 \vitm + (1 - \beta_2)\nabla\fitm(\xit)\odot\nabla\fitm(\xit)
	\\
	\tilde{v}_i^{t+1} &= \beta_3 \tilde{v}_i^{t} + (1 - \beta_3)\max\{\tilde{v}_i^{t},\vit\}
	\\
	\xit &= \sum_{j \in \NN_i}w_{ij}\xjtm + \gamma^t \frac{\mmit}{\tilde{v}_i^{t+1}},
\end{align*}
for all $i \in \until{N}$. As suggested in~\cite{nazari2019dadam},  we set $\beta_1 = \beta_3 = 0.9$, $\beta_2 = 0.999$, and a diminishing stepsize $\gamma^t = (\frac{\alpha}{t})^{-1/2}$ for some $\alpha > 0$.

\subsection{Distributed classification via logistic regression}
\label{sec:classification}

Consider a network of agents that want to cooperatively %
train a linear classifier for a set of (moving) points in a given feature space. At time $t \ge 0$, each agent $i$ is equipped 
with $m_i \in \mathbb{N}$ points $p_{i,1}^t, \dots, p_{i,m_i}^t \in \R^d$ with binary 
labels $l_{i,k} \in \{-1,1\}$ for all $k \in \until{m_i}$. 
The problem consists of building a linear classification model from the given points, also
called training samples. In particular, we look for a separating hyperplane described
by a pair $(w,b) \in\R^{d}\times \R$ given by
$\{p \in \R^d \mid w^\top p + b = 0\}$. 
This \emph{online} classification problem can be posed at each time $t \ge0$, as a minimization problem described by
\begin{equation}\label{eq:logistic_regression}
	\min_{w,b}\:
	\sum_{i=1}^{N}\sum_{k=1}^{m_i}\log\left(1 + e^{-l_{i,k}(w^\top p_{i,k}^t + b)}\right) 
	+ \tfrac{C}{2}\left(\|w\|^2 + b^2 \right),
\end{equation}
where $C > 0$ is the so-called regularization parameter. Notice that the 
presence of the regularization makes the cost function strongly convex. 
Each point $p_{i,k}^t \in \R^2$ moves along a circle of radius $r = 1$ according to the following law
 \begin{equation*}
p_{i,k}^t = p_{i,k}^c + r
\begin{bmatrix}
\cos(t/100)
\\
\sin(t/100)
\end{bmatrix},
 \end{equation*}
where $p_{i,k}^c \in \R^2$ represents the randomly generated center of the considered circle.  
We consider a network of $N=50$ agents and pick $m _i = 5$ (for all $i$).
We performed an experimental tuning to optimize the step-sizes to enhance
the convergence properties of each algorithm. In particular, we selected
$\alpha = 0.1$ for \algo/, $\alpha = 0.05$ for \GT/, $\alpha = 0.1$ for
\DGD/, and $\alpha = 0.1$ for \DADAM/.
We performed Monte Carlo simulations consisting of $100$ trials, in which we alternatively consider an undirected, connected \er/ graph with connectivity parameter $0.5$, and a ring graph.
In Fig.~\ref{fig:logistic_montecarlo_relative_cost}, we plot the average across
the trials of the relative cost error, namely
$\frac{f^t(\avgxtm) - f^t(\xstartm)}{f^t(\xstartm)}$, with $\xstartm$
being the minimum of $f^t$ for all $t$.
\begin{figure}
	\centering
	\includegraphics[scale=0.9]{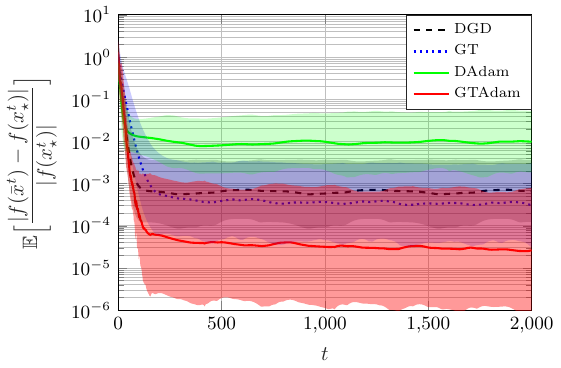}
	\caption{Distributed classification via logistic regression. Mean of the relative cost errors and $1$-standard deviation band obtained 
		with Monte Carlo simulations consisting of $100$ trials in which each of the $N=50$ agents 
		is equipped with $m = 5$ points.}
	\label{fig:logistic_montecarlo_relative_cost}
\end{figure}
The plot highlights that \algo/ exhibits a faster convergence compared to the
other algorithms, and achieves a smaller tracking error.

	Finally, we consider a static instance of problem~\eqref{eq:logistic_regression}, i.e., with fixed objective function $\fitm = f_i$ for all $t \ge 0$ and $i \in \until{N}$. We consider a network of $N=50$ agents in a ring topology. We take $\alpha = 0.001$ for \algo/, $\alpha = 0.01$ for \GT/, $\alpha = 0.1$ for \DGD/, and $\alpha = 0.5$ for \DADAM/. 
	In Fig.~\ref{fig:static}, we plot the error $\norm{\avgxtm - x_\star}$ achieved by the considered methods, where $x_\star \in \R^d$ is the (fixed) optimal solution of the problem. Fig.~\ref{fig:static} clearly shows the benefit of the tracking mechanism, which allows \algo/ and \GT/ to achieve the exact problem solution. The plot also shows that \algo/ is faster than \GT/.
	\begin{figure}[!htpb]
		\centering
		\includegraphics[scale=0.9]{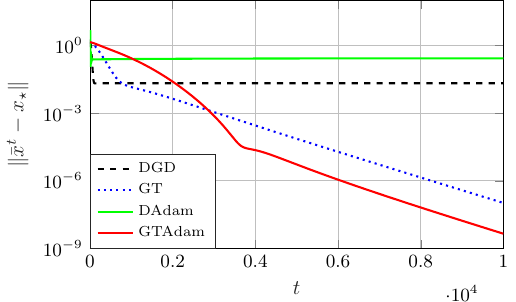}
		\caption{Distributed classification via logistic regression. Static set-up in which each of the $N=50$ agents is equipped with $m = 5$ points.}
		\label{fig:static}
	\end{figure}

\subsection{Distributed source localization in smart sensor networks}
\label{sec:localization}

The estimation of the exact position of a source is a key task in several
applications in multi-agent distributed estimation and learning. 
Here, we consider an online version of the static localization
problem considered in~\cite[Section~4.2]{rabbat2004distributed}. An acoustic
source is positioned at an unknown and time-varying location $\theta^t_{\text{target}} \in \R^2$. 
A network of $N$ sensors is capable to measure an isotropic signal related to 
such location and aims at cooperatively estimating $\theta^t_{\text{target}}$. Each sensor is
placed at a fixed location $c_i \in \R^2$ and takes, at each time
instant, a noisy measurement according to an isotropic propagation model
$\omega_{i}^{t} \triangleq \frac{A}{\|\theta^t_{\text{target}} - c_i\|^\gamma} + \epsilon_{i}^{t}$,
where $A > 0$, $\gamma \ge 1$ describes the attenuation
characteristics of the medium through which the signal propagates, and
$\epsilon_{i}^{t}$ is a zero-mean Gaussian noise with variance
$\sigma^2$. %
With this data, each node $i$ at each time $t \ge 0$ addresses a nonlinear least-squares online problem
\begin{equation*}
\begin{aligned}
	\min_{\x} \:\:
	\sum_{i=1}^N \Big( \omega_{i}^{t} - \frac{A}{\|\x - c_i\|^\gamma}\Big)^2.
\end{aligned}
\end{equation*}
We consider a network of $N=50$ agents randomly located according to a
two-dimensional Gaussian distribution with zero mean and variance
$a^2 I_2 = 100 I_2$. The agents want to track the location of a moving
target which starts at a random location $\theta^0_{\text{target}} \in \R^2$ generated according
to the same distribution of the agents.  The target moves along a
circle of radius $r = 0.5$ according to the following law
\begin{equation*}
\theta^t_{\text{target}} = \theta_{\text{center}} + r
\begin{bmatrix}
\cos(t/200)\\
\sin(t/200)
 \end{bmatrix},
 \end{equation*}
where $\theta_{\text{center}} \in \R^2$ represents the randomly generated circle center. 
We pick $\gamma = 1$, $A = 100$ and a noise variance $\sigma^2 = 0.001$. 
We take $\alpha = 0.05$ for \algo/, $\alpha = 0.02$ for \GT/, 
$\alpha = 0.05$ for \DGD/, and $\alpha = 0.0725$ for \DADAM/. The agents communicate according to a ring graph.
In Fig.~\ref{fig:target_cost} we compare the algorithm performance in terms of the (instantaneous) cost function evolution. 
\begin{figure}%
	\centering
	\includegraphics[scale=0.9]{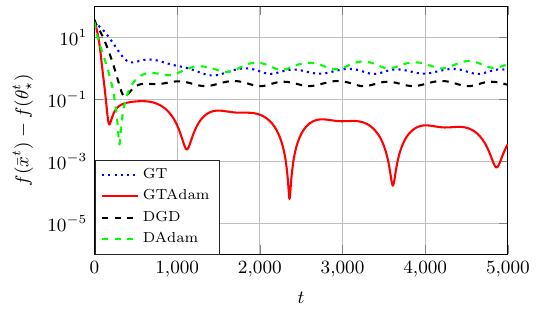}
	\caption{Distributed source localization. Cost function values obtained for a network of $N=50$ agents.}
	\label{fig:target_cost}
\end{figure}
Fig.~\ref{fig:target_regret} shows that the best performance in terms 
of average dynamic regret is obtained by \algo/.
\begin{figure}%
	\centering
	\includegraphics[scale=0.9]{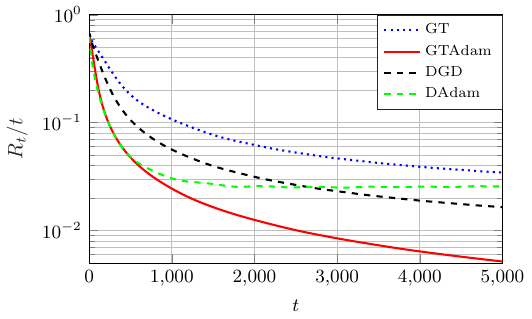}
	\caption{Distributed source localization. Average regret values obtained for a network of $N=50$ agents.}
	\label{fig:target_regret}
\end{figure}
\algo/ seems to achieve a smaller error with respect to the other algorithms. We make these comparisons by using $\theta^t_{\text{target}}$ as the optimal estimate associated to 
	the iteration $t$, but we note that the actual optimal solution may be slightly different since the noise $\epsilon_{i}^t$ affects the measurement of each agent.

\subsection{Distributed image classification via neural networks}
In this example, we consider an image classification problem in which $N$ nodes have to 
cooperatively learn how to correctly classify images.
We pick the Fashion-MNIST dataset~\cite{xiao2017fashion} consisting of 
black-and-white $28{\times}28$-pixels images of clothes belonging to $10$ different classes. Each agent $i$ has a local dataset $\mathcal{D}_i=\{(p_{i,k}, y_{i,k})\}_{k=1}^{m_i}$ 
consisting of $m_i$ images $p_{i,k}\in\R^{28\times 28}$ and their associated 
labels $y_{i,k}\in\until{10}$. 
The goal of the agents is to learn the parameters $\x_\star$ of a 
function $h(p;\x_\star)$ so that $h(p_{i,k};\x_\star)$ gives the correct label for $p_{i,k}$. 
The resulting optimization problem is%
\begin{align*}
\min_\x  \: \: 
\sum_{i=1}^N \frac{1}{m_i} \sum_{k=1}^{m_i} V(y_{i,k}, h(p_{i,k}, \x)) +C\|\x\|^2,
\end{align*}
where $V(\cdot)$ is the categorical cross-entropy loss, and $C > 0$ is a regularization parameter. The local cost function is
\begin{align*}
f_i(\x\mid\mathcal{D}_i)
\!\triangleq\! 
\E_{\mathcal{D}_i}[\ell_i(\x)]
\!=\!
\frac{1}{m_i} \sum_{k=1}^{m_i} \!V(y_{i,k}, h( p_{i,k}, \x))\! +\! \frac{C}{N} \|\x\|^2.
\end{align*}
We represent $h(\cdot)$ by a neural network 
with one hidden layer (with $300$ units with ReLU activation function) and an output layer with $10$ units. 
Moreover, we pick $N=16$ agents and associate each of them $m_i=3750$ labeled images for all $i$. 
We performed Monte Carlo simulations consisting of $100$ trials and each trial lasts $10$ epochs over the local datasets.
The results are reported In Fig.~\ref{fig:class_loss} and Fig.~\ref{fig:class_acc} in terms of the global training loss $f(\{\bar{\x}_{ep},\mathcal{D}_1, \dots, \mathcal{D}_N\}) \triangleq \sum_{i=1}^N f_i(\bar{\x}_{ep}\mid \mathcal{D}_i),$
with $\bar{\x}_{ep} \triangleq \tfrac{1}{N}\sum_{i=1}^N\x_{i,ep}$, %
and the average training accuracy 
$\psi({\{\bar{\x}_{ep}, \mathcal{D}_1, \dots, \mathcal{D}_N\}}) \triangleq \tfrac{1}{N}\sum_{i=1}^N \psi_i(\bar{\x}_{ep}\mid \mathcal{D}_i)$,
where $\psi_i(\bar{\x}_{ep}\mid \mathcal{D}_i)$ is the accuracy achieved with $\bar{\x}_{ep}$ on the local dataset of the agent $i$ at the end of epoch $ep$.
We take $\alpha = 0.001$ for \algo/, and $\alpha=0.1$ for \DGD/, \GT/, and \DADAM/. 
As it can be appreciated from Fig.~\ref{fig:class_loss} and Figure~\ref{fig:class_acc}, in both cases \algo/ outperforms the other algorithms.
\begin{figure}
	\centering
	\includegraphics[scale=0.9]{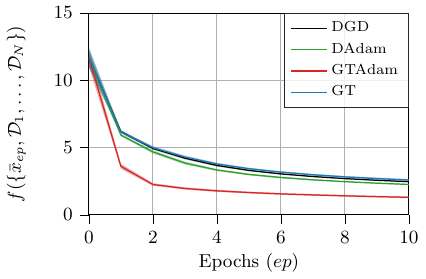}
	\caption{Distributed image classification. Mean and $3-$standard deviation band of the training loss.}
	\label{fig:class_loss}
\end{figure}
\begin{figure}
	\centering
	\includegraphics[scale=0.9]{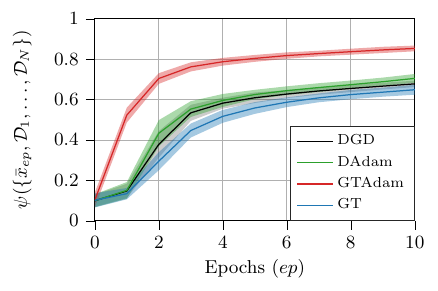}
	\caption{Distributed image classification. Mean and $3-$standard deviation band of the training accuracy.}
	\label{fig:class_acc}
\end{figure}

\section*{Conclusions}
\label{sec:conclusions}

We proposed \algo/, a novel distributed optimization
algorithm tailored for multi-agent online learning. Inspired by the
  popular Adam algorithm, our novel \algo/ is based
  on the gradient tracking distributed scheme which is enhanced with adaptive
  first- and second-order momentum estimates of the gradient. 
We provided theoretical bounds on the convergence of the proposed algorithm. 
Moreover, we tested \algo/ in three different scenarios showing a performance improvement with respect to state-of-the-art algorithms.

\bibliographystyle{ieeetr}

\begin{thebibliography}{10}

	\bibitem{li2022survey}
	X.~Li, L.~Xie, and N.~Li, ``A survey of decentralized online learning,'' {\em
		arXiv preprint arXiv:2205.00473}, 2022.
	
	\bibitem{simonetto2016class}
	A.~Simonetto, A.~Mokhtari, A.~Koppel, G.~Leus, and A.~Ribeiro, ``A class of
		prediction-correction methods for time-varying convex optimization,'' {\em
		IEEE Trans.~on Signal Processing}, vol.~64, no.~17, pp.~4576--4591, 2016.
	
	\bibitem{fazlyab2017prediction}
	M.~Fazlyab, S.~Paternain, V.~M. Preciado, and A.~Ribeiro,
		``Prediction-correction interior-point method for time-varying convex
		optimization,'' {\em IEEE Trans.~on Automatic Control}, vol.~63, no.~7,
		pp.~1973--1986, 2017.
	
	\bibitem{cavalcante2013distributed}
	R.~L. Cavalcante and S.~Stanczak, ``A distributed subgradient method for
		dynamic convex optimization problems under noisy information exchange,'' {\em
		IEEE Journal of Selected Topics in Signal Processing}, vol.~7, no.~2,
		pp.~243--256, 2013.
	
	\bibitem{towfic2014adaptive}
	Z.~J. Towfic and A.~H. Sayed, ``Adaptive penalty-based distributed stochastic
		convex optimization,'' {\em IEEE Transactions on Signal Processing}, vol.~62,
		no.~15, pp.~3924--3938, 2014.
	
	\bibitem{mateos2014distributed}
	D.~Mateos-N{\'u}nez and J.~Cort{\'e}s, ``Distributed online convex optimization
		over jointly connected digraphs,'' {\em IEEE Transactions on Network Science
		and Engineering}, vol.~1, no.~1, pp.~23--37, 2014.
	
	\bibitem{akbari2015distributed}
	M.~Akbari, B.~Gharesifard, and T.~Linder, ``Distributed online convex
		optimization on time-varying directed graphs,'' {\em IEEE Transactions on
		Control of Network Systems}, vol.~4, no.~3, pp.~417--428, 2015.
	
	\bibitem{hosseini2016online}
	S.~Hosseini, A.~Chapman, and M.~Mesbahi, ``Online distributed convex
		optimization on dynamic networks,'' {\em IEEE Transactions on Automatic
		Control}, vol.~61, no.~11, pp.~3545--3550, 2016.
	
	\bibitem{shahrampour2017distributed}
	S.~Shahrampour and A.~Jadbabaie, ``Distributed online optimization in dynamic
		environments using mirror descent,'' {\em IEEE Transactions on Automatic
		Control}, vol.~63, no.~3, pp.~714--725, 2017.
	
	\bibitem{akbari2019individual}
	M.~Akbari, B.~Gharesifard, and T.~Linder, ``Individual regret bounds for the
		distributed online alternating direction method of multipliers,'' {\em IEEE
		Transactions on Automatic Control}, vol.~64, no.~4, pp.~1746--1752, 2019.
	
	\bibitem{yi2020distributed}
	X.~Yi, X.~Li, L.~Xie, and K.~H. Johansson, ``Distributed online convex
		optimization with time-varying coupled inequality constraints,'' {\em IEEE
		Transactions on Signal Processing}, vol.~68, pp.~731--746, 2020.
	
	\bibitem{ram2010distributed}
	S.~S. Ram, A.~Nedi{\'c}, and V.~V. Veeravalli, ``Distributed stochastic
		subgradient projection algorithms for convex optimization,'' {\em Journal of
		optimization theory and applications}, vol.~147, no.~3, pp.~516--545, 2010.
	
	\bibitem{farina2019randomized}
	F.~Farina and G.~Notarstefano, ``Randomized block proximal methods for
		distributed stochastic big-data optimization,'' {\em IEEE Transactions on
		Automatic Control}, vol.~66, no.~9, pp.~4000--4014, 2021.
	
	\bibitem{pu2020distributed}
	S.~Pu and A.~Nedi{\'c}, ``Distributed stochastic gradient tracking methods,''
		{\em Mathematical Programming}, pp.~1--49, 2020.
	
	\bibitem{shi2015extra}
	W.~Shi, Q.~Ling, G.~Wu, and W.~Yin, ``Extra: An exact first-order algorithm for
		decentralized consensus optimization,'' {\em SIAM Journal on Optimization},
		vol.~25, no.~2, pp.~944--966, 2015.
	
	\bibitem{varagnolo2015newton}
	D.~Varagnolo, F.~Zanella, A.~Cenedese, G.~Pillonetto, and L.~Schenato,
		``{N}ewton-{R}aphson consensus for distributed convex optimization,'' {\em
		IEEE Transactions on Automatic Control}, vol.~61, no.~4, pp.~994--1009, 2015.
	
	\bibitem{dilorenzo2016next}
	P.~Di~Lorenzo and G.~Scutari, ``Next: In-network nonconvex optimization,'' {\em
		IEEE Transactions on Signal and Information Processing over Networks},
		vol.~2, no.~2, pp.~120--136, 2016.
	
	\bibitem{nedic2017achieving}
	A.~Nedi{\'c}, A.~Olshevsky, and W.~Shi, ``Achieving geometric convergence for
		distributed optimization over time-varying graphs,'' {\em SIAM Journal on
		Optimization}, vol.~27, no.~4, pp.~2597--2633, 2017.
	
	\bibitem{qu_harnessing_2018}
	G.~Qu and N.~Li, ``Harnessing {Smoothness} to {Accelerate} {Distributed}
		{Optimization},'' {\em IEEE Transactions on Control of Network Systems},
		vol.~5, no.~3, pp.~1245--1260, 2018.
	
	\bibitem{xu2017convergence}
	J.~Xu, S.~Zhu, Y.~C. Soh, and L.~Xie, ``Convergence of asynchronous distributed
		gradient methods over stochastic networks,'' {\em IEEE Transactions on
		Automatic Control}, vol.~63, no.~2, pp.~434--448, 2017.
	
	\bibitem{xin2018linear}
	R.~Xin and U.~A. Khan, ``A linear algorithm for optimization over directed
		graphs with geometric convergence,'' {\em IEEE Control Systems Letters},
		vol.~2, no.~3, pp.~315--320, 2018.
	
	\bibitem{scutari2019distributed}
	G.~Scutari and Y.~Sun, ``Distributed nonconvex constrained optimization over
		time-varying digraphs,'' {\em Mathematical Programming}, vol.~176, no.~1-2,
		pp.~497--544, 2019.
	
	\bibitem{zhu2010discrete}
	M.~Zhu and S.~Mart{\'\i}nez, ``Discrete-time dynamic average consensus,'' {\em
		Automatica}, vol.~46, no.~2, pp.~322--329, 2010.
	
	\bibitem{kia2019tutorial}
	S.~S. Kia, B.~Van~Scoy, J.~Cortes, R.~A. Freeman, K.~M. Lynch, and S.~Martinez,
		``Tutorial on dynamic average consensus: The problem, its applications, and
		the algorithms,'' {\em IEEE Control Systems Magazine}, vol.~39, no.~3,
		pp.~40--72, 2019.
	
	\bibitem{zhang2019distributed}
	Y.~Zhang, R.~J. Ravier, M.~M. Zavlanos, and V.~Tarokh, ``A distributed online
		convex optimization algorithm with improved dynamic regret,'' in {\em {IEEE}
		Conf.~on Decision and Control {(CDC)}}, pp.~2449--2454, 2019.
	
	\bibitem{notarnicola2020personalized}
	I.~Notarnicola, A.~Simonetto, F.~Farina, and G.~Notarstefano, ``Distributed
		personalized gradient tracking with convex parametric models,'' {\em IEEE
		Transactions on Automatic Control}, 2022.
	
	\bibitem{simonetto2019personalized}
	A.~Simonetto, E.~Dall'Anese, J.~Monteil, and A.~Bernstein, ``Personalized
		optimization with user's feedback,'' {\em Automatica}, vol.~131, p.~109767,
		2021.
	
	\bibitem{kingma2014adam}
	D.~P. Kingma and J.~Ba, ``Adam: A method for stochastic optimization,'' {\em
		arXiv preprint arXiv:1412.6980}, 2014.
	
	\bibitem{bock2018improvement}
	S.~Bock, J.~Goppold, and M.~Wei{\ss}, ``An improvement of the convergence proof
		of the adam-optimizer,'' {\em arXiv preprint arXiv:1804.10587}, 2018.
	
	\bibitem{chen2018convergence}
	X.~Chen, S.~Liu, R.~Sun, and M.~Hong, ``On the convergence of a class of
		adam-type algorithms for non-convex optimization,'' in {\em International
		Conference on Learning Representations}, 2018.
	
	\bibitem{reddi2018convergence}
	S.~J. Reddi, S.~Kale, and S.~Kumar, ``On the convergence of adam and beyond,''
		in {\em International Conference on Learning Representations}, 2018.
	
	\bibitem{zhou2018adashift}
	Z.~Zhou, Q.~Zhang, G.~Lu, H.~Wang, W.~Zhang, and Y.~Yu, ``Adashift:
		Decorrelation and convergence of adaptive learning rate methods,'' in {\em
		International Conference on Learning Representations}, 2018.
	
	\bibitem{nazari2019dadam}
	P.~Nazari, D.~A. Tarzanagh, and G.~Michailidis, ``Dadam: A consensus-based
		distributed adaptive gradient method for online optimization,'' {\em arXiv
		preprint arXiv:1901.09109}, 2019.
	
	\bibitem{mokhtari2016online}
	A.~Mokhtari, S.~Shahrampour, A.~Jadbabaie, and A.~Ribeiro, ``Online
		optimization in dynamic environments: Improved regret rates for strongly
		convex problems,'' in {\em {IEEE} Conference on Decision and Control (CDC)},
		pp.~7195--7201, 2016.
	
	\bibitem{dall2020optimization}
	E.~Dall'Anese, A.~Simonetto, S.~Becker, and L.~Madden, ``Optimization and
		learning with information streams: Time-varying algorithms and
		applications,'' {\em IEEE Signal Processing Magazine}, vol.~37, no.~3,
		pp.~71--83, 2020.
	
	\bibitem{li2020online}
	Y.~Li, G.~Qu, and N.~Li, ``Online optimization with predictions and switching
		costs: Fast algorithms and the fundamental limit,'' {\em IEEE Transactions on
		Automatic Control}, vol.~66, no.~10, pp.~4761--4768, 2020.
	
	\bibitem{horn2012matrix}
	R.~A. Horn and C.~R. Johnson, {\em Matrix analysis}.
	\newblock Cambridge university press, 2012.
	
	\bibitem{nedic2009distributed}
	A.~Nedi{\'c} and A.~Ozdaglar, ``Distributed subgradient methods for multi-agent
		optimization,'' {\em IEEE Transactions on Automatic Control}, vol.~54, no.~1,
		pp.~48--61, 2009.
	
	\bibitem{rabbat2004distributed}
	M.~Rabbat and R.~Nowak, ``Distributed optimization in sensor networks,'' in
		{\em International Symposium on Information Processing in Sensor Networks},
		pp.~20--27, 2004.
	
	\bibitem{xiao2017fashion}
	H.~Xiao, K.~Rasul, and R.~Vollgraf, ``Fashion-mnist: a novel image dataset for
		benchmarking machine learning algorithms,'' {\em arXiv preprint
		arXiv:1708.07747}, 2017.
	
	\end{thebibliography}

\appendix

\section{Appendix}
\label{appendix}

We report a lemma that will be used in the proof of Lemma~\ref{lemma:xbar} (cf. Appendix~\ref{sec:proof_lemma_7}).

\begin{lemma}\label{lemma:strongandLipconvergence}
	Let $f(x):\R^n\to\R$ be $\sigma$-strongly convex and with $L$-Lipschitz continuous gradient. Moreover, let $D\in\R^{n\times n}$ be positive definite diagonal matrix such that $D_{ii}\in[\epsilon, M]$ for all $i=1,\dots,n$ with $M\geq\epsilon>0$ and $M<\infty$. Let $\bL=ML$ and $\bs=\epsilon\sigma$. Let $\xt = \xtm - \alpha D\nabla f(\xtm)$, with $\alpha\in(0,\frac{2}{\bL}]$. Then%
	$\|\xt-\x_\star\| \leq \max\{(1-\alpha\bs),(1-\alpha\bL) \}\|\xtm - \x_\star\|$.
\end{lemma}
\begin{proof}
	Let $h(x)$ be a function such that $\nabla h(x)= D \nabla f(x)$ for all $x$. It can be easily shown that $h$ has $\bL$-Lipschitz continuous gradients, in fact 
	\begin{align*}
	&\|\nabla h(x) - \nabla h(y)\| = \|D\nabla f(x) - D\nabla f(y)\|\\
	&\leq \|D\|\|\nabla f(x) - \nabla f(y)\|\leq \|D\|L \|x-y\|\leq ML \|x-y\|.
	\end{align*}
	Moreover $h$ is $\bs$-strongly convex, since $\nabla^2h(x)=D\nabla^2 f(x)\succeq D\sigma I\geq \epsilon\sigma I$. Define $g(x)=h(x)-\frac{\bs}{2}\|x\|^2$. Notice that, by definition, $g$ is convex and with $(\bL-\bs)$-Lipschitz continuous gradient. Thus, by definition we have
	\begin{equation}\label{1}
	\langle \nabla g(x) - \nabla g(y), x-y\rangle \geq \tfrac{1}{\bL-\bs}\|\nabla g(x) - \nabla g(y)\|^2.
	\end{equation}
	Now, by using the definition of $g$ one has%
	\begin{align}
	&\langle \nabla h(x) - \bs x- \nabla h(y) + \bs y, x-y\rangle 
	\notag\\
	&= \langle \nabla h(x) - \nabla h(y), x-y\rangle - \bs \|x-y\|^2.
	\label{2}
	\end{align}%
	Moreover
	\begin{align}
	\|\nabla g(x) - \nabla g(y)\|^2 &= \|\nabla h(x) - \bs x- \nabla h(y) + \bs y\|^2\nonumber\\
	&= \|\nabla h(x) - \nabla h(y)\|^2 + \bs^2\| x- y\|^2
	\notag\\
	&\hspace{3ex} - 2\bs\langle \nabla h(x) - \nabla h(y), x-y\rangle.
	\label{3}
	\end{align}
	By combining~\eqref{1},~\eqref{2}, and~\eqref{3} we get
	\begin{align}
	&\langle \nabla h(x) - \nabla h(y), x-y\rangle
	\notag\\
	&\qquad 
	\geq\tfrac{\bs\bL}{\bs+\bL} \|x-y\|^2 + \tfrac{1}{\bs+\bL}\|\nabla h(x) - \nabla h(y)\|^2.\label{eq:lemma_intermediate_result}
	\end{align}
	Now, by using the update rule, one has
	\begin{align*}
	&\|\xt-\x_\star\|^2 = \|\xtm - \alpha D\nabla f(\xtm)-\x_\star\|^2
	\\
	&=\|\xtm -\x_\star\|^2 -2\alpha\langle D\nabla f(\xtm),\xtm-\x_\star\rangle +\alpha^2 \|D\nabla f(\xtm)\|^2
	\\
	&=\|\xtm -\x_\star\|^2 -2\alpha\langle D\nabla f(\xtm) - D\nabla f(\x_\star),\xtm-\x_\star\rangle \\
	&\hspace{3ex}+\alpha^2 \|D\nabla f(\xtm) - D\nabla f(\x_\star)\|^2.
	\end{align*}
	By using the result~\eqref{eq:lemma_intermediate_result} with $\nabla h(x) = D\nabla f(x)$, we have
	\begin{align*}
	&\|\xt-\x_\star\|^2 \leq\|\xtm -\x_\star\|^2 +\alpha^2 \|D\nabla f(\xtm) - D\nabla f(\x_\star)\|^2\\
	&\hspace{1ex}-2\alpha\tfrac{\bs\bL}{\bs+\bL} \|\xtm-\x_\star\|^2 - \tfrac{2\alpha}{\bs+\bL}\|D\nabla f(\xtm) - D\nabla f(\x_\star)\|^2
	\\
	&=\left(1-2\alpha\tfrac{\bs\bL}{\bs+\bL}\right)\|\xtm-\x_\star\|^2
	\\
	&\hspace{3ex}+\alpha\left(\alpha - \tfrac{2}{\bs+\bL}\right)\|D\nabla f(\xtm) - D\nabla f(\x_\star)\|^2
	\\
	&\leq\left(1-2\alpha\tfrac{\bs\bL}{\bs+\bL}\right)\|\xtm-\x_\star\|^2+\alpha\left(\alpha \bL^2 -\tfrac{2\bs^2}{\bs+\bL}\right)\|\xtm - \x_\star\|^2 
	\\
	&\leq \max\{(1-\alpha\bs)^2,(1-\alpha\bL)^2 \}\|\xtm - \x_\star\|^2. 
	\end{align*}
	The proof follows by taking the square root of both sides.
\end{proof}

\subsection{Proof of Lemma~\ref{lemma:m_bound}}\label{sec:proof_lemma_1}

	By using the update~\eqref{eq:m_avg}, we can write
	\begin{align}
	\!\!\!\norm{\avgmt} \!=\! \norm{\beta_1\avgmtm \!+\! (1\!-\!\beta_1)\avgstm}
	\!\leq\! \beta_1\!\norm{\avgmtm}\! +\! (1\!-\!\beta_1)\!\norm{\avgstm}
	\label{eq:bound_avgtm2}
	\end{align}
	in which we use the triangle inequality. Regarding the term
	$\norm{\avgstm}$, we use the relation
	$\avgstm = \tfrac{1}{N}\sum_{i=1}^N\nabla \fitm(\xitm)$, and we add
	$\tfrac{1}{N}\sum_{i=1}^N\nabla \fitm(\xstartm) = 0$, thus obtaining
	\begin{align}
	&\!\|\avgstm\|\!=\! \norm{\tfrac{1}{N}\!\sum_{i=1}^N\!\nabla \fitm(\xitm)\!-\!\tfrac{1}{N}\!\sum_{i=1}^N\!\nabla\fitm(\xstartm)}
	\!\stackrel{(a)}\leq\! \tfrac{L}{N}\!\sum_{i=1}^N\!\norm{\xitm\!-\! \xstartm}
	\notag
	\\ &
	\!\stackrel{(b)}
	\leq\! \tfrac{L}{\srN}\norm{\xtm\!-\!\one\xstartm}
	\!\stackrel{(c)}\leq\! \tfrac{L}{\srN}\norm{\xtm\!-\!\one\avgxtm}\! +\! L\norm{\avgxtm\!-\!\xstartm}\label{eq:bound_avgstm},
	\end{align}
	where in $(a)$ we exploit the Lipschitz continuity of the gradients of the cost functions (cf. Assumptions~\ref{assumption:lipschitz}), in $(b)$ we use
	the basic algebraic property
	$\sum_{i=1}^{N}\|\theta_i\| \leq \sqrt{N}\|\theta\|$ for a generic
	vector $\theta \triangleq \col(\theta_1, \dots, \theta_N)$, and in $(c)$ we add and subtract 
	the term $\one \avgxtm$ and apply the triangle inequality. The proof follows by combining the bounds~\eqref{eq:bound_avgtm2} and~\eqref{eq:bound_avgstm}.

\subsection{Proof of Lemma~\ref{lemma:d}}\label{sec:proof_lemma_3}

	By using~\eqref{eq:d_global} and~\eqref{eq:d_avg}, one has
		\begin{align}
		& \norm{\dt\!-\!\one\avgdt}\!=\!\norm{\left(I-\ones\right)(\Vt + \epsilon I)^{-1/2}\mt}
		\notag\\
		&
		\hspace{3ex}
		\stackrel{(a)}{\leq}\!\norm{(\Vt \!+\! \epsilon I)^{-1/2}} \! \norm{\mt}
		\stackrel{(b)}{\leq}\!\tfrac{1}{\sqrt{\epsilon}}\!\norm{\mt}
		\notag\\
		&
		\hspace{3ex}
		\stackrel{(c)}{\leq}\!\tfrac{1}{\sqrt{\epsilon}}\!\norm{\mt \!-\!\one\avgmt\!} 
		\!+\! \tfrac{\sqrt{N}}{\sqrt{\epsilon}}\!\norm{\avgmt\!}\!,\label{eq:bound_d}
		\end{align}
		where in $(a)$ we apply the Cauchy-Schwarz inequality combined with $\norm{I-\ones} \leq 1$, in $(b)$ we use the bound $\norm{(\Vt + \epsilon I)^{-1/2}} \leq \frac{1}{\sqrt{\epsilon}}$ (justified by the fact that $\vt \ge 0$ for all $t \ge 0$), in $(c)$ we add and subtract within the norm $\one\avgmt$ and apply the triangle inequality and an algebraic property.
		The proof follows by using Lemma~\ref{lemma:m_bound} and~\ref{lemma:m} in~\eqref{eq:bound_d}.

\subsection{Proof of Lemma~\ref{lemma:s}}\label{sec:proof_lemma_4}

	By combining~\eqref{eq:s_global} and~\eqref{eq:s_avg} one has
		\begin{align}
		&\|\st - \one \avgst \| 
		\notag\\
		&
		= \left\|W\stm  + \gt - \gtm - \one\!\Bigl(\!\avgstm  + \tfrac{1}{N}\sum_{i=1}^N (\git - \gitm)\! \Bigr)\!\right\|\nonumber\\
		&\stackrel{(a)}\leq\left\|\left(W - \ones\right)(\stm -\one\avgstm)\right\| \!+\! \left\|\left( I-\ones \right)(\gt - \gtm)\right\|
		\nonumber\\
		&\stackrel{(b)}=\sigma_W\norm{\stm -\one\avgstm} + \norm{\gt - \gtm},\label{eq:gt_gtm}
		\end{align}
		where $(a)$ uses $\one \in \ker \left (W - \ones \right )$ and the triangle inequality, and $(b)$ combines the Cauchy-Schwarz inequality with the bounds $\norm{W - \ones} \leq \sigma_W$ and $\norm{I-\ones} \leq 1$.
		Let $\tilde{\mathbf{g}}^t \triangleq \col(\nabla f_{1}^{t+1}(x_{1}^{t}),\dots,\nabla f_{N}^{t+1}(x_{N}^{t}))$ and manipulate the term $\norm{\gt - \gtm}$ in~\eqref{eq:gt_gtm} as
	\begin{align}\nonumber
	&\|\gt - \gtm\| \leq \|\gt - \tilde{\mathbf{g}}^t\| + \norm{\tilde{\mathbf{g}}^t - \gtm}
	\\
	&
	\stackrel{(a)}\leq L\|\xt - \xtm\| + \|\tilde{\mathbf{g}}^t - \gtm\| \stackrel{(b)}\leq L\norm{\xt - \xtm} + \sqrt{N}\eta^t
	\notag\\
	&\stackrel{(c)}= L\|W\xtm -\alpha\dt - \xtm\| + \sqrt{N}\eta^t,\label{eq:gradients_difference}
	\end{align}
	where in $(a)$ we use the Lipschitz continuity of the gradients of the cost functions (cf. Assumption~\ref{assumption:lipschitz}), $(b)$ uses the variable $\eta^t$ (cf~\eqref{eq:bound_optimum_gradient}), and $(c)$ uses the
	update~\eqref{eq:x_global} of $\xt$. Let us manipulate the first term on the
	right-hand side of~\eqref{eq:gradients_difference}:
	\begin{align}
	& \norm{W\xtm -\alpha\dt - \xtm}
	\stackrel{(a)}{=}
	\! 
	\norm{(W-I)(\xtm-\one\avgxtm) -\alpha\dt}
	\nonumber\\
	& \stackrel{(b)}{\leq} 
	\! 
	\|W\!-\!I\|\|\xtm\!-\!\one\avgxtm\| \!+\! \alpha\|\dt\!-\!\one\avgdt\| 
	\!+\!\alpha\|\one\avgdt\|,\label{eq:passage}
	\end{align}
	where $(a)$ uses the fact that $\ker \left(W-I\right) = \textrm{span}(\one )$ and in $(b)$ we add and subtract 
	the term $\one\avgdt$ within the norm and we apply the triangle inequality and the Cauchy-Schwarz inequality. Regarding $\|\one\avgdt\|$, we use~\eqref{eq:d_global} and~\eqref{eq:d_avg} 
	to write
		\begin{align}
		&\|\one\avgdt\| = \left\|\ones \dt\right\|=\left\|\ones (\Vt + \epsilon I)^{-1/2} \mt\right\|
		\nonumber\\&
		\stackrel{(a)}{\leq}\!\tfrac{1}{\sqrt{\epsilon}}\!\norm{\mt}\!
		\stackrel{(b)}{\leq}\! \tfrac{1}{\sqrt{\epsilon}}\!\norm{\mt \!- \!\one\avgmt} + \tfrac{\srN}{\sqrt{\epsilon}}\!\norm{\avgmt}\!,
		\label{eq:avgdt_bound}
		\end{align}
		where in $(a)$ we apply the Cauchy-Schwarz inequality and the bounds $\norm{\ones} \leq 1$ and $\norm{(\Vt + \epsilon)^{-1/2}} \leq \tfrac{1}{\sqrt{\epsilon}}$, in $(b)$ we add and subtract within the norm the term $\one\avgmt$, apply the triangle inequality, and use an algebraic property. By combining~\eqref{eq:passage} and~\eqref{eq:avgdt_bound}, we bound~\eqref{eq:gradients_difference} as
		\begin{align}
			&\|\gt - \gtm\| \leq L\|W-I\|\|\xtm-\one\avgxtm\| +\alpha L\|\dt \! -\! \one\avgdt\| 
			\notag\\
			&\hspace{1ex}+
		\alpha\tfrac{ L}{\sqrt{\epsilon}}\norm{\mt - \one\avgmt}\! +\! \alpha\tfrac{ L\srN}{\sqrt{\epsilon}}\norm{\avgmt}\!+\! \srN\eta^t.\label{eq:gradients_difference_bound}
		\end{align}
		Now, by using the bound~\eqref{eq:gradients_difference_bound} within~\eqref{eq:passage}, we get
		\begin{align}
			&\norm{\st - \one\avgst}\leq \sigma_W\norm{\stm - \one\avgstm}
			\notag\\
			&\hspace{1ex}+L\|W-I\|\|\xtm-\one\avgxtm\|+\alpha L\|\dt-\one\avgdt\| 
		\notag\\
		&\hspace{1ex}+
		\alpha\tfrac{ L}{\sqrt{\epsilon}}\norm{\mt - \one\avgmt}\!+\! \alpha\tfrac{ L\srN}{\sqrt{\epsilon}}\norm{\avgmt}\!+\!\srN\eta^t. \label{eq:final_passage}
		\end{align}
		The proof follows by using Lemma~\ref{lemma:m_bound},~\ref{lemma:m} and~\ref{lemma:d} to bound $\norm{\avgmt}$, $\norm{\mt-\one\avgmt}$, and $\norm{\dt - \one\avgdt}$.

\subsection{Proof of Lemma~\ref{lemma:x}}\label{sec:proof_lemma_5}

	By combining~\eqref{eq:x_global} and~\eqref{eq:x_avg}, we have
	\begin{align*}
	\|\xt - \one \avgxt\| &=\| W\xtm - \alpha \dt - \one\avgxtm + \alpha \one \avgdt\|
	\\
    &\stackrel{(a)}\leq \|W\xtm - \one\avgxtm\| + \alpha \|\dt -\one \avgdt\|
    \\
	&\stackrel{(b)}\leq \sigma_W\|\xtm - \one\avgxtm\| + \alpha \|\dt -\one \avgdt\|,
	\end{align*}
	where in $(a)$ we apply the triangle inequality and $(b)$ follows by $\norm{W - \ones} \leq \sigma_W$. 
	The proof follows by Lemma~\ref{lemma:d}.

\subsection{Proof of Lemma~\ref{lemma:z}}\label{sec:proof_lemma_6}

	From the updates of $\avgst$ and $\avgmt$ (cf.~\eqref{eq:s_avg},~\eqref{eq:m_avg}), we get
	\begin{align*}
	& \|\avgst\! -\! \avgmt\|\! = \! \bigg\| \avgstm\! +\!\nablaft-\nablaftm 
	\\
	&\hspace{3ex}- \beta_1\avgmtm - (1-\beta_1)\avgstm\bigg\|
	\\
	& \stackrel{(a)} 
	\leq \beta_1 \| \avgstm-\avgmtm \| \!+\! \left\| \nablaft-\nablaftm \right\|,
	\end{align*}
	where $(a)$ uses the triangle inequality. 
	By adding and subtracting within the second norm $\tfrac{1}{N}\sum_{i=1}^{N}\nabla \fit(\avgxt)$ and $\tfrac{1}{N}\sum_{i=1}^{N}\nabla \fit(\xitm)$, we use the triangle inequality to obtain
	\begin{align}
	&\|\avgst - \avgmt\| 
	\leq\beta_1\|\avgstm - \avgmtm\|
	\notag\\
	&\hspace{3ex}+ \left\|\nablaft-\tfrac{1}{N}\sum_{i=1}^{N}\nabla\fit(\avgxt)\right\|
	\nonumber\\
	&\hspace{3ex}+ \left\|\tfrac{1}{N}\sum_{i=1}^{N}\nabla \fit(\xitm)-\nablaftm\right\|\nonumber\\
	&\hspace{3ex}+\left\|\tfrac{1}{N}\sum_{i=1}^{N}\nabla \fit(\avgxt)-\tfrac{1}{N}\sum_{i=1}^{N}\nabla \fitm(\xitm)\right\|\nonumber\\
	&
	\stackrel{(a)}\leq\beta_1\|\avgstm - \avgmtm\|+\tfrac{L}{\srN}\|\xt-\one\avgxt\|+\tfrac{1}{\srN}\eta^{t}
	\notag\\
	&\hspace{3ex}+\tfrac{L}{\srN}\|\xtm-\one\avgxt\|\label{eq:z_passage1},
	\end{align}
	where in $(a)$ we use the Lipschitz continuity of the gradients of the cost functions (cf. Assumptions~\ref{assumption:lipschitz}) for the second and the third norm, and we use $\eta^t$ (cf.~\eqref{eq:bound_optimum_gradient}).
	Now, we replace
	$\avgxt$ with its update~\eqref{eq:x_avg} within the last term of~\eqref{eq:z_passage1} obtaining
		\begin{align}
		&\|\avgst - \avgmt\| \leq
		\beta_1\|\avgstm - \avgmtm\|+\tfrac{L}{\srN}\|\xt-\one\avgxt\|
		\notag\\
		&\hspace{3ex}+\tfrac{L}{\srN}\eta^{t} +\tfrac{L}{\srN}\|\one\avgxtm-\alpha\one\avgdt-\xtm\|
		\notag\\
		&\stackrel{(a)}\leq
		\beta_1\|\avgstm - \avgmtm\|+\tfrac{L}{\srN}\|\xt-\one\avgxt\|+\tfrac{L}{\srN}\eta^{t}
		\notag\\
		&\hspace{3ex} 
		+\tfrac{L}{\srN}\|\xtm - \one\avgxtm\| + \alpha\tfrac{ L}{\sqrt{\epsilon}\srN}\norm{\mt - \one\avgmt}
		\notag\\
		&\hspace{3ex}+ \alpha\tfrac{ L}{\sqrt{\epsilon}}\norm{\avgmt},
		\end{align}
		where in $(a)$ we use \eqref{eq:avgdt_bound} to bound $\norm{\one\avgdt}$. The proof follows by using Lemma~\ref{lemma:x},~\ref{lemma:m_bound}, and~\ref{lemma:m} to bound $\norm{\xt-\one\avgxt}$, $\norm{\avgmt}$, and $\norm{\mt - \one\avgmt}$, respectively.

\subsection{Proof of Lemma~\ref{lemma:xbar}}\label{sec:proof_lemma_7}

	By using~\eqref{eq:x_avg}, one has
	\begin{align}\nonumber
	\norm{\avgxt - \xstart} &= \norm{\avgxtm - \alpha \avgdt - \xstart}
	\\
	&
	\stackrel{(a)}\leq \norm{\avgxtm - \alpha \avgdt - \xstartm}+ \zeta^t,\nonumber
	\end{align}
	where in $(a)$ we add and subtract within the norm $\xstartm$, use the triangle inequality, 
	and use $\zeta^t$ (cf.~\eqref{eq:bound_optimum_gradient}). 
	Now, we add and subtract within the norm
	$\alpha \frac{\one^\top(\Vt+\epsilon I)^{-1/2}\one}{N^2}\nabla
	\ftm(\avgxtm)$ and we use the triangle inequality to write
	\begin{align}\nonumber
	&\|\avgxt - \xstart\| \leq \left\|\avgxtm - \alpha \tfrac{\one^\top(\Vt+\epsilon I)^{-1/2}\one}{N^2}\nabla \ftm(\avgxtm) - \xstartm\right\|
	\nonumber\\
	&\hspace{3ex}+ \alpha\left\| \tfrac{\one^\top(\Vt+\epsilon I)^{-1/2}\one}{N^2}\nabla \ftm(\avgxtm)- \avgdt\right\|\! +\! \zeta^t.\label{eq:inequality}
	\end{align}
	Consider the second term of \eqref{eq:inequality} and use \eqref{eq:d_avg} to write
		\begin{align}
		&\alpha\left\| \tfrac{\one^\top(\Vt+\epsilon I)^{-1/2}\one}{N} \tfrac{\nabla \ftm(\avgxtm)}{N}-  \avgdt\right\|\nonumber
		\\
		&= \alpha \bigg\| \tfrac{\one^\top(\Vt+\epsilon I)^{-1/2}\one}{N}\tfrac{\nabla \ftm(\avgxtm)}{N}-  \tfrac{\one^\top(\Vt+\epsilon I)^{-1/2}}{N}\mt\bigg\|
		\notag
		\\
		&\stackrel{(a)}\leq\alpha\norm{\tfrac{ \one^\top(\Vt + \epsilon I)^{-1/2}\one}{N}\left(\tfrac{\nabla \ftm(\avgxtm)}{N}-\avgmt\right)}
		\notag\\
		&\hspace{3ex} + \alpha\norm{\tfrac{\one^\top(\Vt+\epsilon I)^{-1/2}}{N}(\mt - \one\avgmt)}
		\notag\\
		&\stackrel{(b)}
		\leq
		\tfrac{\alpha}{\sqrt{\epsilon}}\norm{\tfrac{\nabla \ftm(\avgxtm)}{N}\!-\!\avgmt} 
		\!+\! \tfrac{\alpha}{\sqrt{\epsilon}\srN}\norm{\mt\!-\!\one\avgmt},\label{eq:last}
		\end{align}
		where in $(a)$ we add and subtract within the norm the term $\frac{\one^\top(\Vt+\epsilon I)^{-1/2}\one}{N}\avgmt$ and we apply the triangle inequality, in $(b)$ we apply the Cauchy-Schwarz inequality combined with the bounds $\norm{\frac{\one^\top (\Vt + \epsilon I)^{-1/2}\one}{N}} \leq \frac{1}{\sqrt{\epsilon}}$ and $\norm{\frac{\one^\top (\Vt + \epsilon I)^{-1/2}}{N}} \leq \frac{1}{\sqrt{\epsilon}\srN}$.
		Now, we add and subtract the term $\frac{1}{N}\sum_{i=1}^{N}\nabla \fitm(\xitm)$ and then we use the triangle inequality to rewrite the first term of the second member of~\eqref{eq:last} as
	\begin{align}
	&\alpha\tfrac{1}{\sqrt{\epsilon}}\|\tfrac{\nabla \ftm(\avgxtm)}{N}-\avgmt\|\leq\alpha\tfrac{1}{\sqrt{\epsilon}} 
	\Big\|\tfrac{1}{N}\sum_{i=1}^{N}\nabla \fitm(\xitm)-\avgmt\Big\|
	\nonumber\\
	&\hspace{3ex}+ \alpha\tfrac{1}{\sqrt{\epsilon}}
	\Big\|\tfrac{\nabla \ftm(\avgxtm)}{N}
	- \tfrac{1}{N}\sum_{i=1}^{N}\nabla \fitm(\xitm)\Big\|
	\nonumber\\
	&\stackrel{(a)}=\alpha\tfrac{1}{\sqrt{\epsilon}}
	\Big\|\tfrac{1}{N}\sum_{i=1}^{N}\nabla \fitm(\xitm)-\beta_1\avgmtm-(1-\beta_1)\avgstm\Big\|
	\nonumber\\
	&\hspace{3ex} + \alpha\tfrac{1}{\sqrt{\epsilon}}
	\Big\|\tfrac{\nabla \ftm(\avgxtm)}{N} - \tfrac{1}{N}\sum_{i=1}^{N}\nabla \fitm(\xitm)\Big\|
	\nonumber\\
	&\stackrel{(b)}\leq \alpha\tfrac{\beta_1}{\sqrt{\epsilon}}\|\avgstm - \avgmtm\| 
	+ \alpha\tfrac{L}{\sqrt{\epsilon}\srN}\|\xtm-\one\avgxtm\|,
	\label{eq:other_term}
	\end{align}
	where in $(a)$ we use \eqref{eq:m_avg}, $(b)$ uses the relation $\avgstm = \nablaftm$, and %
	the Lipschitz continuity of the gradients of the cost functions (cf. Assumption~\ref{assumption:lipschitz}). %
	Next, in order to bound the right-hand side of  \eqref{eq:inequality}, first notice that
	$\tfrac{1}{\sqrt{G+\epsilon}} < \tfrac{\one^\top(\Vt+\epsilon I)^{-1/2}\one}{N} < \tfrac{1}{\sqrt{\epsilon}}$.
	Moreover, being $\ftm$ $\mu$-strongly convex for all $t\ge0$ (cf. Assumption~\ref{assumption:strong}) and having $L$-Lipschitz continuous gradients (cf. Assumption~\ref{assumption:lipschitz}), we apply Lemma~\ref{lemma:strongandLipconvergence} (in the Appendix) to write
	\begin{align}
	\!\norm{\avgxtm \!-\! \alpha \tfrac{\one^\top(\Vt+\epsilon I)^{-1/2}\one}{N^2}\nabla \ftm(\avgxtm) \!-\! \xstartm} 
	\!\leq\! \phi\norm{\avgxtm\!-\! \xstartm},\label{eq:phi}
	\end{align}
	where
	$\phi \triangleq \max\left\{\left|1-\tfrac{\alpha}{\sqrt{\epsilon+G}}\mu\right|,\left|1-\tfrac{\alpha}{\sqrt{\epsilon}}L\right|\right\}$.
	If we take $\alpha < \min\left\{\tfrac{\sqrt{\epsilon+G}}{\mu}, \tfrac{\sqrt{\epsilon}}{L}\right\}$,	then it holds $\phi = 1 - \alpha\delta$, where $\delta$ is defined in the statement of Theorem~\ref{th:convergence}. By combining the latter with~\eqref{eq:other_term} and~\eqref{eq:phi}, it is possible to upper bound \eqref{eq:inequality} as
	\begin{align}
	\notag
	& \|\avgxt - \xstart\|\leq (1-\alpha\delta)\|\avgxtm-\xstartm\|+\alpha\tfrac{\beta_1}{\sqrt{\epsilon}}\|\avgstm - \avgmtm\|
	\notag
	\\
	& \hspace{1ex}+ \tfrac{\alpha}{\sqrt{\epsilon}\srN}\norm{\mt\!-\!\one\avgmt} 
	+ \tfrac{\alpha L}{\sqrt{\epsilon}\srN}\norm{\xtm \!-\! \one\avgxtm} + \zeta^t.
	\label{eq:solution_last}
	\end{align}
	The proof follows by invoking Lemma~\ref{lemma:m} to bound $\norm{\mt - \avgmt}$ within~\eqref{eq:solution_last}.

	\subsection{Proof of Corollary~\ref{cor:agent_regret}}
	\label{sec:proof_cor_agent_regret}
	
	We add and subtract $\ftm(\avgxtm)$ to $\ftm(\xitm) - \ftm(\xstartm)$, obtaining
	\begin{align}
	&\ftm(\xitm) - \ftm(\xstartm) 
	 = \ftm(\xitm) - \ftm(\avgxtm) + \ftm(\avgxtm) - \ftm(\xstartm) \notag
	 \\
	 &
	\stackrel{(a)}\leq \ftm(\xitm) - \ftm(\avgxtm)  + \frac{L}{2}\norm{\avgxtm - \xstartm}^2
	\notag\\
	&
	\hspace{1ex}
	\stackrel{(b)}\leq \nabla \ftm(\avgxtm)^\top(\xitm\! -\! \avgxtm) \!+ \!\frac{L}{2}\norm{\xitm\! -\! \avgxtm}^2\! +\! \frac{L}{2}\norm{\avgxtm \!-\! \xstartm}^2\!\!,
	\label{eq:difference_f_second_result}
	\end{align}
	where in $(a)$ we apply~\eqref{eq:f_minus_fstar} and in $(b)$ we use the Lipschitz continuity of the gradients of the cost functions (cf. Assumption~\ref{assumption:lipschitz}). Being $\nabla\ftm(\xstartm) = 0$, we rewrite~\eqref{eq:difference_f_second_result} as
	\begin{align}
	&\ftm(\xitm) - \ftm(\xstartm) 
	\leq (\nabla \ftm(\avgxtm) - \nabla \ftm(\xstartm))^\top(\xitm - \avgxtm) 
	\notag\\
	&\hspace{3ex}
	+ \tfrac{L}{2}\norm{\xitm - \avgxtm}^2 
	+ \tfrac{L}{2}\norm{\avgxtm - \xstartm}^2
	\notag\\
&\!\!\!
\stackrel{(a)}{\leq}\!
L\norm{\avgxtm\!-\!\xstartm}\!\norm{\xitm\!-\! \avgxtm}
	\!+\!\tfrac{L}{2}\!\norm{\xitm\! -\! \avgxtm}^2
	\!+\!\tfrac{L}{2}\!\norm{\avgxtm\!-\! \xstartm}^2\!,\label{eq:regret_j_intermediate}
\end{align}
where in $(a)$ we use the Cauchy-Schwarz inequality and the Lipschitz continuity of the gradients of the cost functions (cf. Assumption~\ref{assumption:lipschitz}). Now, we notice that both $\norm{\avgxtm -  \xstartm}$ and $\norm{\xitm - \avgxtm}$ represent a component of the vector $y^t$ defined in~\eqref{eq:yt}, and thus, can be both upper bounded by $\norm{y^t}$. 
	Hence, the inequality~\eqref{eq:regret_j_intermediate} can be elaborated as
	\begin{align}\label{eq:diference_cost_two}
	\ftm(\xitm) - \ftm(\xstartm)  
	& \leq 2L\norm{y^t}^2.
	\end{align}
	By summing over $t$ the inequality in~\eqref{eq:diference_cost_two}, we bound $R_{T,i}$ as
	\begin{align}
	R_{T,i}
	&\leq 2L\sum_{t=1}^T\norm{y^t}^2
	\stackrel{(a)}{\leq} 2L\lambda_1^2\sum_{t=1}^T\norm{y^t}^2_\ped,
	\label{eq:regret_j_final}
	\end{align}
	where in $(a)$ we apply~\eqref{eq:lambda_1}. As done above to prove~\eqref{eq:result_1}, the proof follows by combining~\eqref{eq:regret_j_final},~\eqref{eq:bound_y}, and~\eqref{eq:lambda_2}.

	\subsection{Proof of Corollary~\ref{cor:static}}
	\label{sec:proof_cor_static}
	Using the same arguments of Theorem~\ref{th:convergence} we start from~\eqref{eq:bound_y}.
	Differently from the dynamic case, in the static set-up we have $\nabla\fitm(x) = \nabla f_i(x)$ for all $t$ and $i$, leading to $\xstartm = x_\star$ for all $t$. Thus, we can combine~\eqref{eq:bound_y} with $q^t \equiv 0$, the Lipschitz continuity of the gradient of the cost function (cf. Assumption~\ref{assumption:lipschitz}) and~\eqref{eq:lambda_1}, to write
	$f(\avgxtm) - f(x_\star) \leq \tilde{\rho}^{2t}\frac{L\lambda_1^2}{2}\norm{y^0}_\ped^2
		\le
		\tilde{\rho}^{2t}\frac{L\lambda_1^2\lambda_2^2}{2}\norm{y^0}^2$,
	in which %
	we use~\eqref{eq:lambda_2}. The proof follows by using the definition~\eqref{eq:lambda} of $\lambda$.
	\end{document}